\newtheorem{theorem}{Theorem}
\newtheorem{conjecture}{Conjecture}
\newtheorem{corollary}{Corollary}
\newtheorem{lemma}{Lemma}[section]
\newtheorem{proposition}{Proposition}
\title{Covering by homothets and illuminating convex bodies}
\author{Alexey Glazyrin}
\address{School of Mathematical and Statistical Sciences, The University of Texas Rio Grande Valley, Brownsville, TX 78520}
\email{alexey.glazyrin@utrgv.edu}
\date{}
\begin{document}

\maketitle

\begin{abstract}
The paper is devoted to coverings by translative homothets and illuminations of convex bodies. For a given positive number $\alpha$ and a convex body $B$, $g_{\alpha}(B)$ is the infimum of $\alpha$-powers of finitely many homothety coefficients less than 1 such that there is a covering of $B$ by translative homothets with these coefficients. $h_{\alpha}(B)$ is the minimal number of directions such that the boundary of $B$ can be illuminated by this number of directions except for a subset whose Hausdorff dimension is less than $\alpha$. In this paper, we prove that $g_{\alpha}(B)\leq h_{\alpha}(B)$, find upper and lower bounds for both numbers, and discuss several general conjectures. In particular, we show that $h_{\alpha} (B) > 2^{d-\alpha}$ for almost all $\alpha$ and $d$ when $B$ is the $d$-dimensional cube, thus disproving the conjecture from \cite{bra06}.
\end{abstract}

\section{Introduction}\label{sect:intro}

Let $B\subset \mathbb{R}^d$ be a convex compact body with a non-empty interior. The family of homothets $\mathcal{F}=\{\lambda_1 B, \lambda_2 B,\ldots\}$, $\lambda_i \in (0,1)$, forms a translative covering of $B$ if $B\subseteq \cup_i (\lambda_i B + x_i)$, where $x_i$ are translation vectors in $\mathbb{R}^d$. The general question is to find necessary conditions on coefficients $\lambda_i$ for existence of a translative covering. We define $g_\alpha(B)$ and $g_\alpha(d)$ as follows.

$$g_\alpha(B)=\inf\left\{\sum\limits_{i=1}^k \lambda_i^\alpha: B\subseteq \bigcup\limits_{i=1}^k (\lambda_i B+x_i), \lambda_i\in(0,1), x_i\in\mathbb{R}^d\right\}$$

$$g_\alpha(d)=\inf\left\{g_\alpha(B): B\subset\mathbb{R}^d, B\text{ is a convex body}\right\}$$

Soltan in 1990 formulated the following conjecture which is also stated in \cite[Conjecture 2 of Section 3.2]{bra06}.

\begin{conjecture}[Soltan]\label{con:soltan}

$$g_1(d)\geq d.$$

\end{conjecture}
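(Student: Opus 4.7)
The natural first move is the one-dimensional projection argument: for any unit vector $u$, projecting the covering $B \subseteq \bigcup_i (\lambda_i B + x_i)$ onto the line spanned by $u$ gives a covering of a segment of length $w_u(B)$ by segments of lengths $\lambda_i w_u(B)$, yielding the trivial bound $\sum_i \lambda_i \geq 1$. Conjecture~\ref{con:soltan} asks for a factor-$d$ strengthening, so the task is to find a geometric functional whose additivity over the covering carries an intrinsic factor of $d$.

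My principal approach would use boundary supporting-function information. After approximating $B$ by a smooth strictly convex body, each $u \in S^{d-1}$ determines a unique boundary point $\phi(u) \in \partial B$ with outer normal $u$ and $\langle \phi(u),u\rangle = h_B(u)$. Since $\phi(u)$ lies in some homothet $\lambda_i B + x_i$, the identity $h_{\lambda_i B + x_i}(u) = \lambda_i h_B(u) + \langle x_i,u\rangle$ forces $\langle x_i,u\rangle \geq (1-\lambda_i)h_B(u)$. Letting $U_i = \{u : \phi(u) \in \lambda_i B + x_i\}$, the $U_i$ cover the sphere, and integrating gives
\[
\sum_i \int_{U_i}\!(1-\lambda_i)h_B(u)\,d\sigma(u)\;\leq\;\sum_i \int_{U_i}\!\langle x_i,u\rangle\,d\sigma(u).
\]
The hope is to bound the left-hand side below by a constant multiple of the mean width (hence of $h_B$ in an $L^1$ sense) and the right-hand side above by a term linear in $\sum_i \lambda_i$; if so, the factor of $d$ should appear via Cauchy's formula linking $\int_{S^{d-1}} h_B\,d\sigma$ to $V_1(B)$ and, via the Steiner--Minkowski relations, to higher intrinsic volumes.

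A parallel inductive approach is worth attempting: projecting onto a generic $(d-1)$-dimensional hyperplane transfers the covering to a covering of a $(d-1)$-dimensional body by homothets with unchanged coefficients, so by induction $\sum_i \lambda_i \geq d-1$. The remaining task is to gain one more unit from the transversal direction, for instance by distinguishing homothets that contribute essentially to transversal coverage from those that contribute only to the projected base and arguing that the former group already forces an additional $\geq 1$.

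The main obstacle, common to both plans, is that all of the standard tools---one-dimensional projection, Brunn--Minkowski, subadditivity of volume under finite unions---deliver only estimates of the form $\sum_i \lambda_i^\alpha \geq 1$ with $\alpha$ equal to the homogeneity degree of the functional used, which for $\alpha=1$ is exactly the factor-$1$ bound we started with. To amplify this to a linear-in-$d$ estimate one needs a functional that is simultaneously $1$-homogeneous, translation-sensitive, and comparable to $d\cdot\mathrm{vol}$ for the whole body; no single classical inequality supplies this, and I expect this gap is precisely why the conjecture has remained open.
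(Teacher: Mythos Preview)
The statement you are attempting to prove is not a theorem in the paper; it is Soltan's \emph{conjecture}, and the paper explicitly treats it as open. The paper records only partial results toward it: the case $d=2$ (Soltan--V\'as\'arhelyi), the asymptotic statement $g_\alpha(d)/d\to 1$ (Nasz\'odi), and the case of Euclidean balls (the author's prior work). The strongest general lower bound the paper actually proves is Theorem~\ref{thm:const}, which for $\alpha=1$ gives only $g_1(d)\geq d-\ln^2 d$, not $g_1(d)\geq d$.

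Your proposal is therefore not competing with a proof in the paper, and you yourself correctly diagnose why neither of your two approaches closes: the supporting-function integration and the projection-plus-induction scheme each recover only a bound of the form $\sum_i\lambda_i\geq 1$ (or, with more work, $\geq d-1$), and you identify the missing ingredient as a $1$-homogeneous, translation-sensitive functional that carries an intrinsic factor of $d$. That diagnosis is accurate, and it is consistent with the paper's stance that the conjecture is open. Note that the paper's inductive argument for Theorem~\ref{thm:const} is close in spirit to your second approach---project to a hyperplane, apply induction to get $d-1$ minus an error term, and then try to recover one more unit from the largest homothet---but even there the author only extracts enough to offset most of the logarithmic loss, not the full missing unit. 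So your instinct that ``gaining one more unit from the transversal direction'' is the crux is exactly right, and exactly where the paper's own method also stalls.
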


In \cite{sol93}, Conjecture \ref{con:soltan} was proven for the case $d=2$. In \cite{nas10}, the asymptotic version of the conjecture was proven for any $\alpha$, namely, it was shown that $\lim\limits_{d\rightarrow \infty} \frac{g_\alpha(d)}{d} = 1$ for a fixed $\alpha$. In \cite{gla18}, Conjecture \ref{con:soltan} was proven for the case of $d$-dimensional Euclidean balls, i.e. it was shown that $g_1(\mathbb{B}^d)=d$. The generalized version of the conjecture was also formulated in \cite{gla18} and shown to be tight for Euclidean balls in the case it is true.

\begin{conjecture}\label{conj:general}
For all natural $d$ and all $\alpha$ such that $0\leq \alpha \leq d+1$, $g_\alpha(d)=d+1-\left\lceil{\alpha}\right\rceil$.
\end{conjecture}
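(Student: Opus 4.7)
The plan is to establish the two inequalities $g_\alpha(d)\leq d+1-\lceil\alpha\rceil$ and $g_\alpha(d)\geq d+1-\lceil\alpha\rceil$ separately; write $k=\lceil\alpha\rceil$ so the target is $d+1-k$. Since the formula is piecewise constant in $\alpha$ with jumps at integers, it suffices to analyze each half-open interval $(k-1,k]$, with the integer endpoints serving as anchor cases. Three of these are already understood: $\alpha=d$ via the volume bound $\sum\lambda_i^d\geq 1$; $\alpha=1$ via Soltan's conjecture (known for $d=2$ and for the ball); and $\alpha=0$ via the fact that any translative covering by homothets with $\lambda_i<1$ needs at least $d+1$ pieces.

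For the upper bound, I would route through the inequality $g_\alpha(B)\leq h_\alpha(B)$ proved earlier in the paper and exhibit a body of small $h_\alpha$. A first candidate is a product $B=\Delta^{d+1-k}\times C^{k-1}$, where $\Delta^{d+1-k}$ is a simplex and $C^{k-1}$ is a strictly convex body of dimension $k-1$. Illuminating the simplex factor requires $d+2-k$ directions; by declaring one vertex of the simplex together with its $C^{k-1}$ fibre to be part of the exception set, one direction is saved, and the exception has Hausdorff dimension at most $k-1<\alpha$, so $h_\alpha(B)\leq d+1-k$. Some care is needed when $\alpha$ is strictly less than $k$, as the unilluminated locus may still contain a $(\geq k)$-dimensional slab along a facet; a small perturbation of $\Delta^{d+1-k}$ or of $C^{k-1}$ should reduce the surviving set back to dimension $<\alpha$. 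Alternatively, one can build the covering directly: choose $d+1-k$ large homothets with $\lambda_i\uparrow 1$ whose complement in $B$ lies in a set of dimension $<\alpha$, and then cover the residue by countably many microscopic homothets whose total $\alpha$-power tends to zero.

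For the lower bound, I would attempt induction on the codimension $d-k$, slicing $B$ by a generic $k$-dimensional affine subspace so that the induced covering of the slice retains the same coefficients $\lambda_i$ and a $k$-dimensional lower bound transfers up to dimension $d$. Interpolation between integer values of $\alpha$ would come from intrinsic-volume or boundary-measure arguments on $\partial B$, giving a handle at each integer $\alpha=k$. The main obstacle is precisely this lower bound: volume controls only the top integer $\alpha=d$; Soltan's conjecture at $\alpha=1$ is itself open for $d\geq 3$, so the full statement subsumes it; and boundary-based arguments are subtle because the images of the homothets need not meet $\partial B$ cleanly. A successful proof will likely require identifying a single homothety-equivariant invariant of a convex body that interpolates smoothly between volume at $\alpha=d$ and the combinatorial count at $\alpha=0$.
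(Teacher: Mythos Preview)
The statement you are attempting to prove is a \emph{conjecture} in the paper, not a theorem; there is no proof to compare against. The paper says explicitly (opening of Section~\ref{sect:low_bounds}): ``Although we do not know how to prove Conjecture~\ref{conj:general}\ldots''. What the paper does establish is only the upper bound $g_\alpha(d)\le d+1-\lceil\alpha\rceil$, and by a simpler route than your product construction: apply Theorem~\ref{thm:h-alpha} to the Euclidean ball, for which shadow boundaries along $k$-planes are great $(d-k-1)$-spheres, so Conjecture~\ref{conj:shadow} is trivially true and Theorem~\ref{thm:smooth} gives $h_\alpha(\mathbb{B}^d)=d+1-\lceil\alpha\rceil$. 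For the lower bound the paper proves only $g_\alpha(d)\ge d-\lceil\alpha\rceil\ln^2 d$ (Theorem~\ref{thm:const}), leaving a gap of order $\lceil\alpha\rceil\ln^2 d$.

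You correctly diagnose that the lower bound is the real obstacle and that it subsumes Soltan's conjecture for $d\ge 3$, which is open; that observation alone tells you no complete proof is currently possible. Beyond that, your lower-bound sketch is not an argument but a list of desiderata: the ``slicing induction'' does not work as stated because a generic $k$-flat meets only some of the homothets, so the induced family on the slice need not retain all the coefficients $\lambda_i$, and the ``homothety-equivariant invariant interpolating between volume and count'' is exactly the missing ingredient nobody has found. Your upper-bound route through $\Delta^{d+1-k}\times C^{k-1}$ also has a gap: the boundary of a product is $(\partial A\times C)\cup(A\times\partial C)$, so the non-illuminated set is not merely the $C$-fiber over one simplex vertex; you would need to illuminate all of $A\times\partial C$ as well, and the claimed dimension bound on the exception set is unjustified.
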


For each $d$-dimensional convex body $B$, $g_0(B)$, as defined above, stands for the minimal number of smaller homothets sufficient to cover $B$. There is extensive literature devoted to finding and bounding this number for various cases of convex bodies. The biggest open problem about this number is the Hadwiger conjecture (also known as the Levi-Hadwiger conjecture or the Gohberg-Marcus-Hadwiger conjecture, see \cite{lev55, had57, goh60} and surveys \cite[Section 3.3]{bra06}, \cite{mar99,bez15}).

\begin{conjecture}[Levi, Hadwiger, Gohberg-Marcus]\label{conj:hadw}
For any convex $d$-dimensional body $B$, $g_0(B)\leq 2^d$.
\end{conjecture}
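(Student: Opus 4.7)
This is the Levi--Hadwiger--Gohberg--Marcus conjecture, a central open problem in discrete geometry that remains unresolved for all $d \geq 3$ (and even for centrally symmetric bodies when $d \geq 4$). I can only outline the standard line of attack and indicate where it stalls.

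My plan begins with the classical Boltyanski--Hadwiger equivalence, which replaces the covering formulation by an illumination formulation: $g_0(B)$ coincides with the illumination number $I(B)$, the minimum number of directions such that every boundary point of $B$ is illuminated (admits an interior ray from a source in that direction). The task thus becomes to exhibit $2^d$ illuminating directions for an arbitrary convex body $B$.

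I would then split according to symmetry. For centrally symmetric $B$, a natural attempt is to pick the $2^d$ diagonal directions of a maximal-volume affinely inscribed parallelotope (or one derived from the John ellipsoid): since the cube saturates the conjecture precisely through such diagonals, one conjectures the same set of directions continues to work under symmetric perturbations, and one would try to verify this by controlling normal cones along $\partial B$. For general (not necessarily symmetric) $B$, I would proceed in two steps: first establish a weaker bound $I(B) \leq C \cdot 2^d$ via a Danzer--Grünbaum-type polytopal approximation combined with a volumetric covering by small homothets, then prune down to $2^d$ by aggregating directions whose illuminated regions overlap. As intermediate targets, one can aim just at $I(B) \leq (2+o(1))^d$, which is already beyond current reach.

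The main obstacle is that $2^d$ is achieved exactly by the cube, so every step must be tight on that extremal example. Any coarse argument — volumetric, probabilistic, covering-by-balls, or entropy-based — loses at least a factor polynomial in $d$, which is fatal. A proof would seemingly require identifying a combinatorial structure on $\partial B$ that mirrors the $2^d$ vertex structure of the cube in a canonical, affine-invariant way, and no such structure is known for arbitrary convex bodies. This is precisely why the conjecture has resisted proof for nearly seventy years, and why the present paper instead focuses on the scaling refinements $g_\alpha$ and $h_\alpha$ rather than on the $\alpha = 0$ case.
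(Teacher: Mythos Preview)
Your assessment is correct: the statement is the Levi--Hadwiger--Gohberg--Marcus conjecture, which the paper explicitly presents as an open problem (labelled \emph{Conjecture} and described in the surrounding text as ``the biggest open problem about this number''). The paper offers no proof and does not claim one; it only records the known partial results (the $d=2$ case, centrally symmetric bodies in $\mathbb{R}^3$, various special families) and the best general upper bounds for $g_0(B)$. So there is no proof in the paper to compare your proposal against, and your decision to explain why the problem is open rather than to manufacture an argument is the appropriate response.
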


It is easy to check that $g_0(B)=2^d$ for a $d$-dimensional parallelepiped and it is also conjectured that $g_0(B)< 2^d$ for all other convex bodies. Conjecture \ref{conj:hadw} is known to be true for centrally symmetric bodies in $\mathbb{R}^3$ \cite{las84, sol86} and several special cases of convex bodies in higher dimensions \cite{sch88, bol90, bol92, bol95, bez97a, tik17}.

The general upper bound $\binom{2d}{d} d \ln d (1+o(1))$ for $g_0(B)$ and the upper bound $2^d d \ln d (1+o(1))$ for centrally symmetric convex bodies are direct consequences of Rogers' lower bound on the density of coverings of the Euclidean space by translates of a convex body \cite{rog57} and the Rogers-Shephard inequality \cite{rog57a} (see \cite{bol97, rog97, bor04} for details and \cite{liv16} for a different approach leading to the same bound). For a long time, only very minor improvements of this bound were known \cite{fej09}. Recently, the sub-exponential asymptotic improvement of this bound was shown in \cite{hua18}: $g_0(B)\leq \binom{2d}{d} e^{-c\sqrt{d}}$ for some universal positive constant $c$.

Boltyanski \cite{bol60} and Hadwiger \cite{had60} found the connection between the problem of covering by smaller homothets and illumination problems. Here we follow the approach of Boltyanski. For a convex $d$-dimensional body $B$ we say that its boundary point $x$ is illuminated by an oriented direction $l$ if the ray from $x$ along the direction $l$ intersects the interior of $B$. The set of directions $\mathcal{L}=\{l_1,\ldots,l_k\}$ is said to illuminate $B$ if each boundary point of $B$ is illuminated by some direction from $\mathcal{L}$. By $h(B)$ we denote the minimal cardinality of $\mathcal{L}$ illuminating $B$. Then, as shown by Boltyanski, $h(B)=g_0(B)$.

We suggest an approach generalizing the illumination approach of Boltyanski. Denote by $h_\alpha(B)$ the minimal size of the set of directions $\mathcal{L}$ such that it illuminates all boundary points of $B$ except for a subset whose Hausdorff dimension is less than $\alpha$. In this paper we establish the connection between $g_{\alpha}$ and $h_{\alpha}$ and find new upper and lower bounds for these numbers in the spirit of classical results on covering (illumination) numbers $g_0$ ($h_0$).

The paper is organized as follows. In Section \ref{sect:illum} we show that $g_{\alpha}(B)\leq h_{\alpha}(B)$ and prove several statements about illumination numbers $h_{\alpha}$. We prove that $h_{\alpha} (B)\geq d+1-\left\lceil{\alpha}\right\rceil$ for a $d$-dimensional convex body $B$ and discuss the case of smooth $B$. Section \ref{sect:low_bounds} is devoted to the lower bounds on covering numbers $g_{\alpha}$. In particular, we prove that, for $d\geq 3$, $g_{\alpha}(B)\geq d-\left\lceil{\alpha}\right\rceil \ln^2 d$ and find lower bounds for direct products of convex bodies. In Section \ref{sect:cubes}, we explain why $d$-dimensional cubes provide a counterexample to the general version of the Hadwiger conjecture formulated for $\alpha>1$ in \cite[Section 3.3]{bra06}. In Section \ref{sect:up_bounds}, we use the new covering bound \cite{hua18} and the classical approach of Rogers to find various upper bounds for covering numbers $g_{\alpha}$. It appears that for sufficiently large $\alpha$ covering by smaller homothets leads to exponential improvements compared to the standard covering number $g_0$. On the other hand, as we show in this section, for some $\alpha$ it can be more efficient to use homothets of two unequal sizes.

\section{Illuminating convex bodies}\label{sect:illum}

In this section, we establish the connection between covering numbers $g_{\alpha}$ and illumination numbers $h_{\alpha}$ and discuss various bounds for $h_{\alpha}$ in the spirit of classical results and conjectures. 

\begin{theorem}\label{thm:h-alpha}
For any convex body $B$, $g_\alpha(B)\leq h_\alpha(B)$.
\end{theorem}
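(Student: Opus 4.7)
My plan is to convert the $h_\alpha$-illumination of $B$ into a $g_\alpha$-covering by adding a handful of small ``correction'' homothets near the set that the illumination fails to cover. Set $k=h_\alpha(B)$ and let $l_1,\ldots,l_k$ be directions illuminating $\partial B\setminus E$ with $\dim_H E<\alpha$. Because illumination is an open condition on $\partial B$, the set $E$ is closed in the compact set $\partial B$ and therefore compact.

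The first ingredient is a cheap cover of a neighborhood of $E$. Since $\dim_H E<\alpha$ we have $\mathcal{H}^\alpha(E)=0$, so for any $\varepsilon>0$ one can cover $E$ by finitely many Euclidean balls $B(y_j,r_j)$, $j=1,\ldots,m$, with $\sum_j r_j^\alpha<\varepsilon$ and each $r_j$ smaller than the inradius $\rho$ of $B$. The homothet $(r_j/\rho)B$ has inradius $r_j$, so for a suitable translation $s_j$ we have $B(y_j,r_j)\subseteq(r_j/\rho)B+s_j$, and in particular $B\cap B(y_j,r_j)$ is covered. The $\alpha$-sum contributed by these $m$ small homothets is at most $\rho^{-\alpha}\varepsilon$.

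For the remainder, set $U=\bigcup_{j=1}^m B(y_j,r_j)$; the compact set $K=\partial B\setminus U$ is then entirely illuminated by $l_1,\ldots,l_k$. The classical Boltyanski--Hadwiger construction (the one proving $g_0(B)=h(B)$) shows that if all of $\partial B$ is illuminated by $k$ directions, then for every $\lambda$ sufficiently close to $1$ there are translations $t_1(\lambda),\ldots,t_k(\lambda)$ with $B\subseteq\bigcup_i(\lambda B+t_i(\lambda))$. The main technical step is to adapt this to the partial-illumination setting: partition $K$ into closed pieces $K_i$ with each $K_i$ in the open set illuminated by $l_i$, and check that, for $\lambda$ close enough to $1$, the Boltyanski homothet $\lambda B+t_i(\lambda)$ contains a neighborhood of $K_i$ in $B$, while the ``deep'' parts of these $k$ homothets together exhaust the interior of $B$ outside a neighborhood of $E$ that already lies in $U$; consequently $B\setminus U\subseteq\bigcup_i(\lambda B+t_i(\lambda))$. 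Combined with the first ingredient, this yields a translative covering of $B$ by $k+m$ homothets with total $\alpha$-sum at most $k\lambda^\alpha+\rho^{-\alpha}\varepsilon$; letting $\lambda\to 1^-$ and $\varepsilon\to 0$ gives $g_\alpha(B)\le k=h_\alpha(B)$.
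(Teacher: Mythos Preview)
Your plan is correct and is essentially the paper's own argument: cover the compact non-illuminated set $E$ by finitely many small homothets of total $\alpha$-cost below $\varepsilon$, then cover $B\setminus U$ by $k=h_\alpha(B)$ Boltyanski-type homothets with coefficient close to $1$. The paper devotes most of its proof to precisely what you label the ``main technical step'' --- showing that for every open $U\supset E$ one can choose parameters so that the $k$ large homothets already cover $B\setminus U$ --- and carries it out via a monotonicity-plus-compactness argument on the uncovered sets $B_{\delta_1,\delta_2}$ rather than by partitioning $K$, but the strategy is the same.
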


\begin{figure}[t]
\centering
\includegraphics[width=0.6\textwidth]{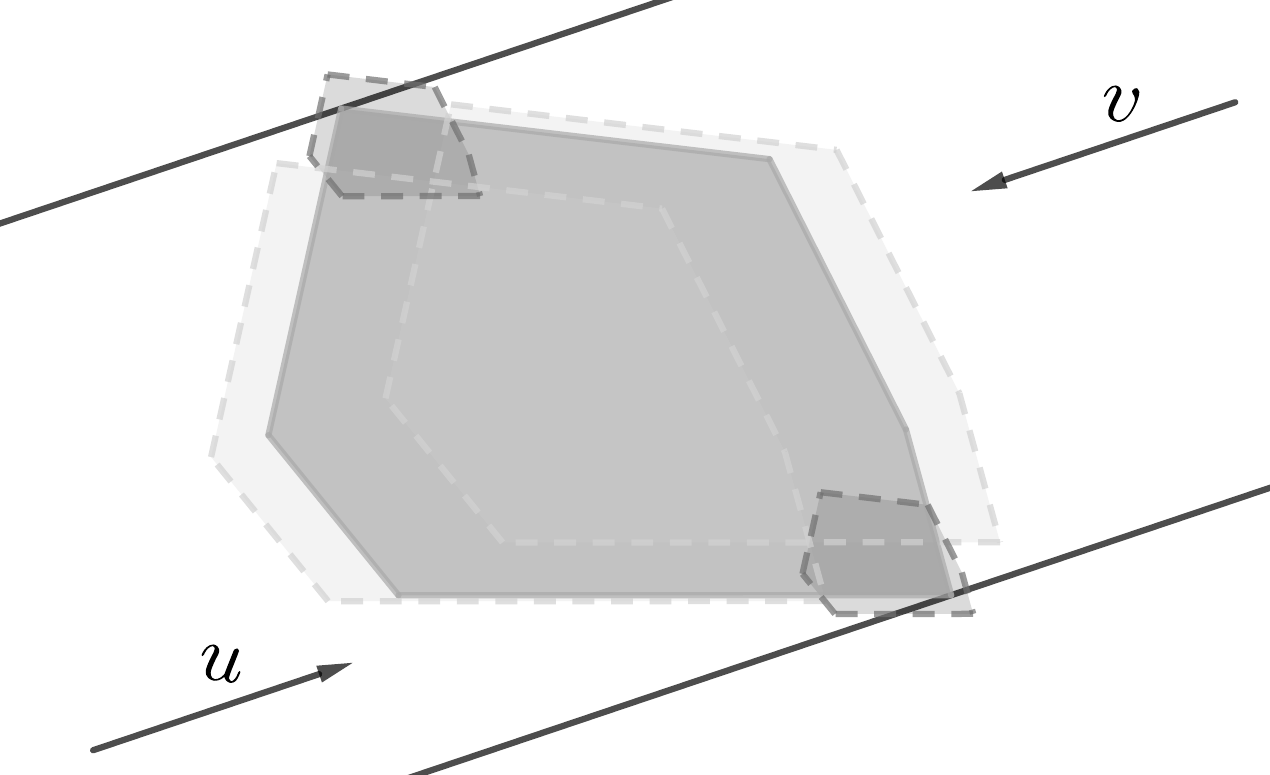}
\caption{Large homothets correspond to illumination vectors $u$ and $v$. The rest is covered by small homothets.}
\label{fig:cov}
\end{figure}

\begin{proof}
The general strategy of the proof is to cover most of the convex body $B$ by large homothets corresponding to illumination directions and use small homothets to cover the rest (see Figure \ref{fig:cov} for the 2-dimensional example).

Choose an arbitrary point in the interior of $B$ as the center $0$. Fix $\delta_1,\delta_2\in(0,1)$. We consider a set of directions $\mathcal{L}$ with precisely $h_{\alpha}(B)$ elements. For each unit vector $v$ defined by an illumination direction from $\mathcal{L}$, take an open homothet $(1-\delta_2)\,\text{int}\,(B)$ centered at $-\delta_1 v$. What remains uncovered in $B$ by the homothets for all illumination vectors is a closed set $B_{\delta_1,\delta_2}$. If a boundary point $y$ of $B$ belongs to a homothet $(1-\delta_2)\,\text{int}\,(B)-\delta_1v$, then $y+\delta_1 v \in (1-\delta_2)\,\text{int}\,(B)$. This implies $y+\delta_1 v \in \,\text{int}\,(B)$ and, subsequently, $y$ must be illuminated by the direction $v$. Hence $B_{\delta_1,\delta_2}$ contains all non-illuminated points on the boundary of $B$.

We want to show that $\bigcap\limits_{\delta_1,\delta_2\in(0,1)} B_{\delta_1,\delta_2}$, denoted by $B_0$, is precisely the set of all non-illuminated points on the boundary of $B$. Firstly, we can show that any interior point of $B$ does not belong to $B_0$. For such a point $x$, there is an open ball with the center $x$ and a radius $\epsilon$ contained in $B$. For simplicity, in this part of the proof, we consider the sets with $\delta_1=\delta_2=\delta$. Consider an arbitrary direction $v$ from $\mathcal{L}$. Then the homothet of $B$ with the center $-\delta v$ will contain the ball with the center $(1-\delta)x-\delta v$ and the radius $(1-\delta)\epsilon$.

$$\Vert (1-\delta)x-\delta v - x\Vert = \delta \Vert x+v \Vert\leq \delta \left(\sup_{x\in B} \Vert x \Vert +1\right),$$
which is less than $(1-\delta)\epsilon$ for a sufficiently small $\delta$ so the ball from the homothet contains $x$.

Now we want to show that any boundary point $y$, illuminated by a direction $v$, also does not belong to $B_0$. If $y$ is illuminated by $v$, then there is $\delta_1\in(0,1)$ such that $y+\delta_1v\in \,\text{int}\,(B)$. Then one can find $\delta_2\in(0,1)$ such that $\frac 1 {1-\delta_2 }(y+\delta_1v)\in \,\text{int}\,(B)$. Therefore, $y$ does not belong to $B_{\delta_1,\delta_2}$ and, subsequently, does not belong to $B_0$.

For the next part of the proof, we will show that, given $\delta_1,\delta_2,\Delta_1\in (0,1)$, $\Delta_1<\delta_1$, there exists $\Delta_2\in (0,1)$ such that the intersection of the homothet $-\delta_1 v + (1-\delta_2)\,\text{int}\,(B)$ with $B$ is a subset of $-\Delta_1 v + (1-\Delta_2)\,\text{int}\,(B)$. Consider $x$ from this intersection, i.e. $x\in B$ and $x=-\delta_1 v + (1-\delta_2)y$, where $y\in\,\text{int}\,(B)$. Then

$$x=\frac {\Delta_1} {\delta_1} x + \left(1-\frac {\Delta_1} {\delta_1}\right)x = \frac {\Delta_1} {\delta_1}\left( -\delta_1 v + (1-\delta_2)y\right) +\left(1-\frac {\Delta_1} {\delta_1}\right)x =$$

$$=-\Delta_1 v + \frac {\Delta_1} {\delta_1} (1-\delta_2)y + \left(1-\frac {\Delta_1} {\delta_1}\right)x$$
Taking $\Delta_2=\frac {\Delta_1 \delta_2} {\delta_1}$ and $\gamma=\frac {\Delta_1 (1-\delta_2)} {\delta_1-\Delta_1 \delta_2}$ so that $\Delta_2$ and $\gamma$ are both from $(0,1)$ and $\gamma y + (1-\gamma)x\in\,\text{int}\,(B)$, we get

$$x= -\Delta_1 v + (1-\Delta_2) (\gamma y + (1-\gamma)x) \in -\Delta_1 v + (1-\Delta_2)\,\text{int}\,(B).$$
The statement above implies that, given $\delta_1,\delta_2,\Delta_1\in (0,1)$, $\Delta_1<\delta_1$, there exists $\Delta_2\in (0,1)$ such that $B_{\Delta_1,\Delta_2}\subseteq B_{\delta_1,\delta_2}$.

Now consider an arbitrary open set $A$ containing $B_0$. Then $B\setminus A$ is a closed subset of $B\setminus B_0 = \bigcup\limits_{\delta_1,\delta_2\in(0,1)} B\setminus B_{\delta_1,\delta_2}$. Hence the sets $B\setminus B_{\delta_1,\delta_2}$ form an open covering of the compact set $B\setminus A$. Due to compactness, this open covering must contain a finite subset covering $B\setminus A$ too. Thus there exists a finite intersection $\bigcap\limits_{i=1}^N B_{\delta_{1i},\delta_{2i}}\subset A$. For $\Delta_1$ we choose an arbitrary positive number less than $\min\limits_{1\leq i\leq N} \{\delta_{1i}\}$. As $\Delta_2$ we take the minimal number that can be obtained by the method described above for all triples $\delta_{1i}, \delta_{2i}, \Delta_1$. This way we guarantee that $B_{\Delta_1,\Delta_2}\subseteq B_{\delta_{1i},\delta_{2i}}$ and, therefore $B_{\Delta_1,\Delta_2}\subset A$.

For the last step of the proof, we consider a covering of $B_0$ by open balls. Since the Hausdorff $\alpha$-content of $B_0$ is 0, for any $\varepsilon>0$ there exists such a covering with balls of radii $r_1,\ldots,r_M$ (the number is finite due to compactness of $B_0$) such that $\sum\limits_{j=1}^M r_j^{\alpha}<\varepsilon$. Assume an open ball of a radius $r$ with the center at $0$ is a subset of $B$. Then we substitute each ball of radius $r_j$ by an open homothet $\frac {r_j} {r}\,\text{int}\,(B)$. For the union of all these homothets, there exist, as was shown above, $\Delta_1$ and $\Delta_2$ such that $B_{\Delta_1,\Delta_2}$ is a subset of this union. If we add $h_{\alpha}(B)$ homothets covering $B\setminus B_{\Delta_1,\Delta_2}$, we obtain a covering of $B$. The sum of $\alpha$-powers of the homothety coefficients for this covering is

$$ h_{\alpha} (B) (1-\Delta_2)^\alpha + \sum\limits_{j=1}^M \left( \frac {r_j} {r}\right)^{\alpha}< h_{\alpha}(B) + \frac {\varepsilon} {r^{\alpha}}.$$

This value can be made as close to $h_{\alpha} (B)$ as desired and, therefore, $g_{\alpha}(B)\leq h_{\alpha}(B)$.
\end{proof}

Note that, for $\alpha=0$, $h_0(B)$ is just $h(B)$ and, due to Hadwiger and Boltyanski, there is equality in Theorem \ref{thm:h-alpha}. One can conjecture that this is the case for any natural $\alpha\leq d$ but we will show in Section \ref{sect:cubes} that this is not true even for cubes.

The results from \cite{bol60, goh60} also imply that $h(B)\geq d+1$ for any $d$-dimensional convex body $B$. We generalize these results for $h_\alpha(B)$.

\begin{theorem}\label{thm:genlowbound}
For any $d$-dimensional convex body $B$, $h_\alpha(B)\geq d+1-\left\lceil{\alpha}\right\rceil$.
\end{theorem}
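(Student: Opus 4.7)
My plan is to prove the slightly stronger statement that for any collection of $k$ illumination directions $l_1,\ldots,l_k$ the non-illuminated set $N\subseteq\partial B$ has Hausdorff dimension at least $d-k$; taking $k=d-\lceil\alpha\rceil$ then forces $\dim_H N\geq\lceil\alpha\rceil\geq\alpha$, which disqualifies any such collection from giving an $\alpha$-illumination and yields $h_\alpha(B)\geq d+1-\lceil\alpha\rceil$. The strategy is a fiberwise version of the classical proof of $h(B)\geq d+1$: instead of exhibiting a single face of $B$ with no unlit points, I will exhibit a whole $(d-k)$-parameter family of slices, each of which must contain at least one unlit point.

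Let $V=\operatorname{span}(l_1,\ldots,l_k)$. Since $\dim V\leq k$, the orthogonal complement $V^\perp$ has dimension at least $d-k$, so one can choose a $(d-k)$-dimensional subspace $W\subseteq V^\perp$; let $\pi\colon\mathbb{R}^d\to W$ denote the orthogonal projection. Its kernel $W^\perp$ has dimension $k$ and contains $V$, so every $l_i$ is a vector in $W^\perp$. For each $y\in\operatorname{int}(\pi(B))$ the fiber $B_y:=(y+W^\perp)\cap B$ is a $k$-dimensional convex body sitting in the affine subspace $y+W^\perp$, and the $l_i$ serve as legitimate illumination directions in that subspace. The classical Boltyanski--Gohberg--Markus--Hadwiger bound gives $h(B_y)\geq\dim B_y+1=k+1$, so at least one point $x_y\in\partial B_y$ is not illuminated inside $B_y$ by any $l_i$. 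Because $l_i\in W^\perp$, the ray $x_y+tl_i$ stays in $y+W^\perp$, and for $y\in\operatorname{int}(\pi(B))$ one has $\operatorname{int}(B)\cap(y+W^\perp)=\operatorname{relint}(B_y)$, so $x_y$ in fact lies in $N$.

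Choosing such an $x_y$ for every $y\in\operatorname{int}(\pi(B))$ yields a set $S\subseteq N$ with $\pi(S)=\operatorname{int}(\pi(B))$, an open subset of $W$ of dimension $d-k$. Since $\pi$ is $1$-Lipschitz, Hausdorff dimension cannot drop under $\pi$, so $\dim_H N\geq\dim_H S\geq\dim_H\pi(S)=d-k$, as needed. The only step I expect to require any real care is the equivalence of fiberwise and ambient illumination on interior fibers, which follows from the openness of $\pi$ on $\operatorname{int}(\pi(B))$ together with the upper semi-continuity of the fiber map for convex bodies; after that, recognizing that the classical $h$-bound applies slice by slice is the one creative move.
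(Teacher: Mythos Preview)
Your argument is correct. The key technical point---that for $y\in\operatorname{int}(\pi(B))$ one has $\operatorname{int}(B)\cap(y+W^\perp)=\operatorname{relint}(B_y)$---is a standard fact about convex bodies (it follows once you know $\operatorname{int}(B)$ meets the fiber, which your openness remark gives), and with it the equivalence of ambient and fiberwise illumination is immediate. The Lipschitz estimate $\dim_H S\geq\dim_H\pi(S)=d-k$ then closes the argument.

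Your route differs from the paper's. The paper also projects along the span of the illumination directions, but it treats the directions asymmetrically: it first passes to the shadow boundary $\Gamma(B,L)$ with respect to the span $L$ of $k-1$ of the directions, and then, for the remaining direction $u$, argues by a further projection along $\pi_L(u)$ that a set of the right Hausdorff dimension inside $\Gamma(B,L)$ is still unlit by $u$. You instead fiber over a full $(d-k)$-dimensional quotient by a subspace containing \emph{all} $k$ directions at once and invoke the classical bound $h(B_y)\geq k+1$ on each $k$-dimensional slice as a black box. This is cleaner and more conceptual: the ``last direction'' step in the paper is essentially a bare-hands instance of the fiberwise illumination bound you cite, so your approach absorbs that case analysis into a single appeal to the known result. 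The trade-off is that your proof leans on the identity $\operatorname{relint}(B_y)=\operatorname{int}(B)\cap(y+W^\perp)$, which you should state and justify explicitly rather than gesture at; the paper's shadow-boundary formulation sidesteps that verification.
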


For the proof of this theorem we will use the notion of \textit{shadow boundaries} (see \cite{mar96} for more details on the subject). For a $k$-dimensional linear subspace $L$ of $\mathbb{R}^d$, where $1\leq k\leq d-1$, by $\pi_L$ we mean an orthogonal projection along $L$ onto the subspace $L^{\perp}$. The shadow boundary $\Gamma (B,L)$ of a $d$-dimensional convex body $B$ along $L$ is the preimage of the relative boundary of $\pi_L (B)$ under $\pi_L^{-1}$. In terms of illumination, $\Gamma (B,L)$ is the set of points on the boundary of $B$ that are not illuminated by any direction from $L$.

\begin{proof}[Proof of Theorem \ref{thm:genlowbound}]
Since $h_\alpha(B)\geq h_{\left\lceil{\alpha}\right\rceil} (B)$, it is sufficient to prove the theorem for integer $\alpha$. We consider an arbitrary illumination of $B$ by $d-\alpha$ directions and we want to show that the set of non-illuminated boundary points of $B$ has a non-zero $\alpha$-dimensional Hausdorff content.

Let $L$ be a linear subspace generated by arbitrary $d-\alpha-1$ directions. Our goal is to show that the set of points not illuminated by directions from $L$ is at least $\alpha$-dimensional and the last direction cannot illuminate enough points to make the Hausdorff dimension smaller than $\alpha$. For the first part, we note that the dimension of $L$ is no greater than $d-\alpha-1$. The shadow boundary $\Gamma (B,L)$ is not illuminated by either of these directions. The $\alpha$-dimensional content of $\Gamma (B,L)$ is not 0 because the $\alpha$-dimensional content of the boundary of $\pi_L (B)$ is not 0.

To finish the proof it is sufficient to show that the Hausdorff dimension of the points from $\Gamma (B,L)$ not illuminated by the last direction is at least $\alpha$ as well. Denote the unit vector of this direction by $u$. If $v=\pi_L(u)$ is $0$, then $u$ does not illuminate any points from $\Gamma (B,L)$ so we assume $v$ is not 0. Denote by $V$ the line $\{tv\,\vert\,t\in\mathbb{R}\}$. We consider the projection $\pi_{L+V}=\pi_V\circ\pi_L$. The image of $B$ under $\pi_{L+V}$ has a non-zero $\alpha$-dimensional content. Now we note that for each point of this image its preimage under $\pi_V^{-1}$ contains a point from the boundary of $\pi_L(B)$ such that at least one exterior normal to the relative boundary of $\pi_L(B)$ has a non-negative scalar product with $v$. Taking a preimage of this point under $\pi_L^{-1}$ we find a point of $\Gamma (B,L)$ that is not illuminated by $u$. Hence for any point from $\pi_{L+V}(B)$, there is a point from $\Gamma (B,L)$ in its preimage under $\pi_{L+V}^{-1}$ which is not illuminated by $u$. Therefore, the set of all points not illuminated by $u$ has the dimension of at least $\alpha$ as required.
\end{proof}

For the next theorem, we will consider \textit{smooth} convex bodies. By a smooth $d$-dimensional convex body we mean a convex body such that there is a unique supporting hyperplane through each point of its boundary. The key property of illuminations of smooth convex bodies is the following lemma.

\begin{lemma}\label{lem:smooth}
The set of boundary points of a smooth convex body illuminated by directions defined by unit vectors $u_1,\ldots,u_M$ is the same as the set of boundary points of the body illuminated by directions defined by all unit vectors from the non-negaitive cone $\{a_1u_1+\ldots+a_Mu_M\,\vert\,a_i\geq 0\}$.
\end{lemma}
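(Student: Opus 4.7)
One inclusion is immediate, as each $u_i$ is itself a unit vector in the non-negative cone (take coefficient $a_i=1$ and $a_j=0$ for $j\neq i$); hence any boundary point illuminated by some $u_i$ is illuminated by a unit vector from the cone. The substantive content is the reverse inclusion, and the plan is to reduce it to a one-line linear computation by first characterizing illumination in terms of the outer unit normal.

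Concretely, the key auxiliary fact I would establish is that for a smooth convex body $B$ with boundary point $x$ and unique outer unit normal $n_x$, a nonzero vector $w$ is an illumination direction at $x$ precisely when $\langle w,n_x\rangle<0$. The forward direction follows from the supporting hyperplane inequality: if $x+tw\in\mathrm{int}(B)$ for some $t>0$, then $\langle tw,n_x\rangle\leq 0$, and the inequality is strict since an interior point of $B$ cannot lie on any supporting hyperplane (otherwise the hyperplane would split a neighborhood of the point in $B$). For the reverse direction smoothness is essential. The tangent cone $T_B(x)=\overline{\bigcup_{t>0}\,t(B-x)}$ is a closed convex cone contained in every closed half-space cut out by a supporting hyperplane at $x$; since $B$ is smooth, the only such supporting hyperplane has outer normal $n_x$, and $T_B(x)=\{y:\langle y,n_x\rangle\leq 0\}$ in fact equals this half-space. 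The set $\bigcup_{t>0}\,t(\mathrm{int}(B)-x)$ is an open convex cone, contained in $T_B(x)$ and dense in it (since $\mathrm{int}(B)$ is dense in $B$), so by the standard fact that an open convex set coincides with the interior of its closure, it equals $\mathrm{int}(T_B(x))=\{y:\langle y,n_x\rangle<0\}$. Thus every $w$ with $\langle w,n_x\rangle<0$ is of the form $t(q-x)$ for some $q\in\mathrm{int}(B)$, $t>0$, so $w$ illuminates $x$.

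With this characterization the lemma is one step. Suppose a boundary point $x$ is illuminated by the unit vector obtained by normalizing $v=\sum_{i=1}^M a_i u_i$ with $a_i\geq 0$ (not all zero). Then $\langle v,n_x\rangle<0$, which expands to $\sum_i a_i\langle u_i,n_x\rangle<0$; since all $a_i\geq 0$, at least one index $i$ must satisfy $a_i>0$ and $\langle u_i,n_x\rangle<0$, which by the characterization means $u_i$ itself illuminates $x$. The main obstacle is the backward direction of the characterization: without smoothness, the tangent cone at a boundary point may be a proper subcone of the half-space cut out by a single supporting hyperplane, so there can be vectors $w$ with $\langle w,n_x\rangle<0$ that fail to illuminate $x$, and the non-negative combination argument collapses.
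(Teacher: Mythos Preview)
Your proof is correct and follows essentially the same route as the paper: both reduce the lemma to the characterization that, at a boundary point $x$ of a smooth convex body with normal $n_x$, a direction $u$ illuminates $x$ precisely when the inner product $\langle u,n_x\rangle$ has the appropriate sign, and then conclude by linearity of the inner product. The only difference is in how that characterization is justified---you invoke the tangent cone at $x$ being the full half-space (equivalently, the normal cone being a single ray), whereas the paper argues directly that if neither $u$ nor $-u$ illuminated $x$ then the line $x+\mathbb{R}u$ would lie in a second supporting hyperplane at $x$, contradicting smoothness.
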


\begin{proof}
One set is trivially a subset of the other one. Hence we only need to show that if a point $x$ on the boundary of a smooth convex body $B$ is illuminated by the direction $a_1u_1+\ldots+a_ku_k$, $k\leq M$, all $a_i>0$, then it is illuminated by at least one of $u_1,\ldots,u_k$.

For a boundary point $x$ of $B$, there is a unique interior normal vector $n_x$. We can show that $x$ is illuminated by a unit vector $u$ if and only if $u\cdot n_x$ is positive. Indeed, $x$ is definitely not illuminated by the direction of $-u$ because $-u$ and $B$ are separated by the supporting hyperplane at $x$. If $x$ is not illuminated by $u$ as well, then the line defined by $u$ is a supporting line and is contained in some supporting hyperplane which is not orthogonal to $n_x$. This would contradict the uniqueness of the supporting hyperplane at $x$.

If $x$ is illuminated by the unit vector $u=a_1u_1+\ldots+a_ku_k$, all $a_i>0$, then $n_x\cdot (a_1u_1+\ldots+a_ku_k) >0$. Therefore, at least one of $n_x\cdot u_i$ is positive and $x$ is illuminated by $u_i$.
\end{proof}

The next result will be proven modulo a conjecture about shadow boundaries.

\begin{conjecture}\label{conj:shadow}
For a $d$-dimensional convex body $B$ and $k\leq d-1$, there exists a $k$-dimensional linear subspace $L$ of $\mathbb{R}^d$ such that the shadow boundary $\Gamma(B,L)$ has a finite $(d-k-1)$-dimensional Hausdorff measure.
\end{conjecture}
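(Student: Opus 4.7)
The plan is to prove the stronger statement that the conclusion holds for $\mu$-almost every $L\in G(k,d)$, where $\mu$ is the $O(d)$-invariant probability measure on the Grassmannian. This reduces to showing that the average
$$
I(B) := \int_{G(k,d)} \mathcal{H}^{d-k-1}\bigl(\Gamma(B,L)\bigr)\,d\mu(L)
$$
is finite, since then the non-negative integrand must be finite for $\mu$-a.e.\ $L$, and in particular for some $L$.

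I would first treat the case where $B$ is $C^{2}$ with strictly positive Gauss--Kronecker curvature, so that the Gauss map $\nu\colon\partial B\to S^{d-1}$ is a diffeomorphism. A point $x\in\partial B$ belongs to $\Gamma(B,L)$ precisely when its unique outer normal is orthogonal to $L$, i.e.\ when $\nu(x)\in\Sigma_{L}:=S^{d-1}\cap L^{\perp}$, a great $(d-k-1)$-subsphere. By $O(d)$-invariance, averaging over $L\in G(k,d)$ is equivalent to averaging $\mathcal{H}^{d-k-1}(\nu^{-1}(\Sigma))$ over the great $(d-k-1)$-subspheres $\Sigma$ of $S^{d-1}$, and a standard Crofton/coarea computation on $S^{d-1}$ then yields $I(B)=c_{d,k}\cdot\mathcal{H}^{d-1}(\partial B)<\infty$, with $c_{d,k}$ depending only on $d$ and $k$. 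As a sanity check, polytopes fit the pattern for generic $L$, since $\Gamma(B,L)$ is then a finite union of $(d-k-1)$-faces.

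To reach a general convex body I would approximate $B$ by smooth bodies $B_{n}\to B$ in Hausdorff distance; surface areas converge, so $I(B_{n})$ stays uniformly bounded. The main obstacle lies here: the map $B\mapsto\mathcal{H}^{d-k-1}(\Gamma(B,L))$ is \emph{not} continuous under Hausdorff limits, because a flat face of $B$ parallel to $L$ can make $\Gamma(B,L)$ suddenly absorb a whole $(d-1)$-dimensional region while each $\Gamma(B_{n},L)$ remains $(d-k-1)$-dimensional. I would bypass this by rephrasing the integrand intrinsically in terms of the surface area measure $S_{B}$ on $S^{d-1}$ from Brunn--Minkowski theory, which is defined for every convex body and is weakly continuous under Hausdorff convergence. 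Concretely, $\Gamma(B,L)$ corresponds under the set-valued Gauss image to $\Sigma_{L}$, and integrating the indicator $\mathbf{1}_{\Sigma_{L}}$ against $S_{B}$ and then over $L$ should give $I(B)=c_{d,k}\cdot S_{B}(S^{d-1})$, a universal constant times the surface area of $B$. Establishing this identity rigorously for non-smooth $B$, and controlling the discrepancy between $\Gamma(B,L)$ and its normal image in the presence of flat faces, is the delicate point; the Federer curvature measures and the normal cycle of $B$ should provide the right framework to carry this through.
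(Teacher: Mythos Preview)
First, note that the paper does \emph{not} prove this statement: it is explicitly listed as an open conjecture, with the remark that Steenaerts settled only the cases $k=1$ and $k=d-1$. So you are not reconstructing a known proof but attempting to resolve an open problem, and the proposal should be judged on that footing.

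The averaging strategy over the Grassmannian is indeed the natural one (and is essentially what Steenaerts does in the cases he handles), but your execution contains a concrete error that shows the argument, as written, cannot work. Your claimed identity $I(B)=c_{d,k}\,\mathcal{H}^{d-1}(\partial B)$ is false already in the smooth strictly convex case, by homogeneity: replacing $B$ by $\lambda B$ gives $\Gamma(\lambda B,L)=\lambda\,\Gamma(B,L)$, so $I(\lambda B)=\lambda^{d-k-1}I(B)$, whereas $\mathcal{H}^{d-1}(\partial(\lambda B))=\lambda^{d-1}\mathcal{H}^{d-1}(\partial B)$. For $k\geq 1$ these scale differently, so no such identity can hold. (Concretely: for $d=2$, $k=1$, the shadow boundary is always two points, so $I(B)\equiv 2$ regardless of the perimeter of $B$.) A Crofton computation done correctly would produce a curvature integral of homogeneity degree $d-k-1$ (a quermassintegral / intrinsic volume), not the surface area; the Jacobian of $\nu^{-1}$ restricted to the $(d-k-1)$-plane $T_u\Sigma_L$ is not a constant depending only on $d,k$.

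The fallback via the surface area measure has the same defect in sharper form. Integrating $\mathbf{1}_{\Sigma_L}$ against $S_B$ gives $S_B(\Sigma_L)$; for a smooth strictly convex body $S_B$ is absolutely continuous with respect to $\mathcal{H}^{d-1}$ on $S^{d-1}$, and $\Sigma_L$ is a $(d-k-1)$-sphere of $\mathcal{H}^{d-1}$-measure zero, so $S_B(\Sigma_L)=0$, while $\mathcal{H}^{d-k-1}(\Gamma(B,L))>0$. Thus the quantity you propose does not represent $\mathcal{H}^{d-k-1}(\Gamma(B,L))$ at all. The right replacement would have to be a curvature/support measure of the appropriate order, but then the ``delicate point'' you flag (relating the Hausdorff measure of $\Gamma(B,L)$, which lives on $\partial B$, to a measure living on $S^{d-1}$ or on the normal bundle, uniformly in $B$ and for intermediate $k$) is exactly the obstruction that keeps the conjecture open for $2\le k\le d-2$. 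Your outline does not supply a mechanism to overcome it.
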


A more general question was formulated by McMullen who asked whether, given a $d$-dimensional convex body $B$ and $k\leq d-1$, for almost all $k$-dimensional linear subspaces $L$ shadow boundaries $\Gamma(B,L)$ have a finite $(d-k-1)$-dimensional Hausdorff measure. This question was answered positively by Steenaerts \cite{ste85} for the cases $k=1$ or $d-1$. To the best of the author's knowledge, the conjecture and its general version are open for other values of $k$.

Levi \cite{lev55} showed that the covering number $g_0(B)$ is precisely $d+1$ for a smooth $d$-dimensional convex body $B$ (this is also true for $B$ with a smooth belt \cite{dek94}). We will show that Conjecture \ref{conj:shadow} implies the general result for all positive $\alpha$.

\begin{theorem}\label{thm:smooth}
If Conjecture \ref{conj:shadow} is true, for any smooth $d$-dimensional convex body $B$, $h_\alpha(B) = d+1-\left\lceil{\alpha}\right\rceil$.
\end{theorem}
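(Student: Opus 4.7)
The lower bound $h_\alpha(B) \geq d+1-\lceil \alpha \rceil$ is already supplied by Theorem \ref{thm:genlowbound}, so my plan is to establish the matching upper bound by exhibiting $d+1-\lceil \alpha \rceil$ directions that illuminate all of $\partial B$ except a set of Hausdorff dimension strictly less than $\alpha$. Write $m:=\lceil \alpha\rceil$ and first assume $1\leq m\leq d-1$; the extreme values of $m$ will be handled separately at the end.

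The first step is to invoke Conjecture \ref{conj:shadow} with $k=d-m$, producing a $(d-m)$-dimensional linear subspace $L\subset\mathbb{R}^d$ whose shadow boundary $\Gamma(B,L)$ has finite $(d-k-1)=(m-1)$-dimensional Hausdorff measure; in particular its Hausdorff dimension is at most $m-1<\alpha$. By the very definition of the shadow boundary, every boundary point of $B$ outside $\Gamma(B,L)$ is illuminated by some unit vector of $L$. The second step is to finitize this illumination: choose $d+1-m$ unit vectors $u_1,\dots,u_{d+1-m}\in L$ whose non-negative conic hull equals all of $L$ — that is, a positive basis of $L$, concretely the vertices of a regular simplex centered at the origin of $L$. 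Since $B$ is smooth, Lemma \ref{lem:smooth} then identifies the set of boundary points illuminated by $u_1,\dots,u_{d+1-m}$ with the set illuminated by \emph{all} unit vectors of the cone, which here is $L$ itself, i.e.\ with $\partial B\setminus\Gamma(B,L)$. The uninlluminated set is thus contained in $\Gamma(B,L)$ and so has Hausdorff dimension at most $m-1<\alpha$, giving $h_\alpha(B)\leq d+1-m$.

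The boundary cases I would dispose of separately: $m=0$ is exactly Levi's classical statement $h_0(B)=d+1$ for smooth $B$, and $m=d$ is immediate because a single arbitrary direction $u$ leaves non-illuminated only $\{x\in\partial B:u\cdot n_x\leq 0\}$, whose Hausdorff dimension is $d-1<d$. The main obstacle is, of course, that the whole argument is conditional on Conjecture \ref{conj:shadow}: the positive-basis construction and the appeal to Lemma \ref{lem:smooth} are routine once an appropriate $L$ is in hand, but the existence of an $L$ with the required Hausdorff control on $\Gamma(B,L)$ is precisely the open problem, currently known only for $k=1$ and $k=d-1$ through Steenaerts' result.
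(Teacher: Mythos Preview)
Your proposal is correct and follows essentially the same route as the paper: choose a $(d-\lceil\alpha\rceil)$-dimensional subspace $L$ via Conjecture \ref{conj:shadow}, take a positive basis of $L$ consisting of $d+1-\lceil\alpha\rceil$ unit vectors, and apply Lemma \ref{lem:smooth} to identify the non-illuminated set with $\Gamma(B,L)$, whose Hausdorff dimension is at most $\lceil\alpha\rceil-1<\alpha$. Your explicit treatment of the boundary cases $m=0$ (Levi) and $m=d$ is a welcome addition, since the paper's argument implicitly requires $1\leq k\leq d-1$ in the conjecture and leaves those endpoints unaddressed.
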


\begin{proof}
From Theorem \ref{thm:genlowbound}, we already know that $h_\alpha(B) \geq d+1-\left\lceil{\alpha}\right\rceil$ so, to complete the proof of the theorem, it is sufficient to find a set of $d+1-\left\lceil{\alpha}\right\rceil$ illuminating $B$.

Denote $M=d+1-\left\lceil{\alpha}\right\rceil$. We will consider sets of linearly dependent unit vectors $u_1,\ldots,u_M$ such that the non-negative cone formed by these vectors generates a linear subspace $L$ of dimension $M-1$. For each $L$ it is possible to find such a set of vectors. By Lemma \ref{lem:smooth}, the set of points not illuminated by $u_1,\ldots,u_M$ is precisely the shadow set $\Gamma(B,L)$.

If Conjecture \ref{conj:shadow} is true, there exists an $(M-1)$-dimensional linear subspace $L$ such that $\Gamma(B,L)$ has a finite $(d-M)$-dimensional Hausdorff content. This means that the dimension of $\Gamma(B,L)$ is not greater than $d-M=\left\lceil{\alpha}\right\rceil-1<\alpha$.
\end{proof}

Together with Theorem \ref{thm:smooth}, the results of Steenaerts mentioned above allow us to find $h_1(B)$ and $h_{d-1}(B)$ for all smooth bodies.

\begin{corollary}\label{cor:smooth}
For any smooth $d$-dimensional convex body $B$, $h_1(B) = d$ and $h_{d-1}(B)=2$.
\end{corollary}

We also note that $h_\alpha$ can be found for the case of Euclidean balls.

\begin{corollary}\label{cor:ball}
For all natural $d$ and all $\alpha$ such that $0\leq \alpha \leq d+1$, $h_\alpha(\mathbb{B}^d)=d+1-\left\lceil{\alpha}\right\rceil$ where $\mathbb{B}^d$ is the $d$-dimensional unit Euclidean ball.
\end{corollary}

\begin{proof}
Due to Theorem \ref{thm:smooth}, it is sufficient to show that the condition of Conjecture \ref{conj:shadow} is satisfied. A point $x$ on the unit sphere is illuminated by a direction $u$ if and only if $x\cdot u$ is negative. Assume $x$ is not illuminated by a $k$-dimensional linear subspace $L$ of $\mathbb{R}^d$. Then for any $u\in L$, $x\cdot u \geq 0$. On the other hand, $-u\in L$ so $x\cdot (-u)\geq 0$. We conclude that $x\cdot u=0$ for all $u\in L$ and any $x$ satisfying this condition belongs to the shadow boundary. Therefore, the shadow boundary $\Gamma(\mathbb{B}^d, L)$ is the intersection of the $(d-k)$ dimensional subspace $L^{\perp}$ with the unit sphere. This set has finite $(d-k-1)$-dimensional Hausdorff measure.
\end{proof}

Concerning the upper bounds for illumination numbers $h_{\alpha}$, the following conjecture was posed by Bezdek for $\alpha=1$ \cite{bez97} (see also \cite[Conjecture 3 of Section 3.3]{bra06}) and generalized in \cite[Section 3.3]{bra06} for all natural $\alpha$.

\begin{conjecture}\label{conj:hadw+}
For all non-negative integer $\alpha$ and all $d$-dimensional convex polytopes $B$, $h_\alpha(B)\leq 2^{d-\alpha}$.
\end{conjecture}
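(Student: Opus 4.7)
The plan is to proceed by induction on the integer $\alpha$. The base case $\alpha=0$ is the Levi-Hadwiger conjecture, which is classical for the extremal family of parallelepipeds. For the inductive step, I would assume $h_{\alpha-1}(B)\leq 2^{d-\alpha+1}$ and try to extract, from an efficient family witnessing this bound, a subfamily of half the size whose non-illuminated set still has Hausdorff dimension strictly below $\alpha$.

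To carry out this step, I would pass to the combinatorial reformulation on the normal fan of $B$. For a $d$-dimensional polytope, a direction $v$ illuminates the relative interior of a face $F$ precisely when $v\cdot u<0$ for every $u\in N(F)\setminus\{0\}$, so the set of directions illuminating a fixed face is an open cone in $S^{d-1}$, and the boundary points not illuminated by a family of directions form a union of whole open faces whose illumination cones miss every member of the family. Requiring $h_\alpha(B)\leq N$ thus amounts to choosing $N$ directions so that every face of dimension at least $\alpha$ is covered; whatever lower-dimensional faces remain uncovered automatically contribute a set of Hausdorff dimension below $\alpha$.

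For the extremal case $B=[0,1]^d$, each face is labelled by a coordinate subset $I\subseteq[d]$ together with a sign pattern on $I$, and a direction $v$ with nonzero coordinates illuminates that face exactly when the signs of $v|_I$ match the prescribed pattern. Covering all faces of dimension at least $\alpha$ is then equivalent to choosing a family $S\subseteq\{-1,+1\}^d$ whose restriction to every $(d-\alpha)$-element subset of coordinates realizes all $2^{d-\alpha}$ sign patterns, i.e., a binary covering array of strength $d-\alpha$. Meeting $|S|=2^{d-\alpha}$ is exactly the existence of a binary orthogonal array of strength $d-\alpha$ and index one, which I would try to build via parity-check or linear-code constructions and then transfer to arbitrary polytopes by an affine inscription argument.

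The main obstacle sits in precisely this orthogonal-array step. The lower bound $|S|\geq 2^{d-\alpha}$ is immediate, and it is matched for $\alpha\in\{0,1,d-1,d\}$ by simple parity codes, but for intermediate values the requisite index-one binary orthogonal array simply does not exist (the Rao bound is already violated for $\alpha=d-2$ as soon as $d\geq 4$). This means the induction breaks down at exactly the cube case; in view of the abstract's claim that cubes violate Conjecture~\ref{conj:hadw+} for most $(\alpha,d)$, I would expect the conjecture as stated to fail rather than to admit a proof along these lines, and a realistic replacement would use the actual covering-array size in place of $2^{d-\alpha}$.
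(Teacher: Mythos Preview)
The statement is a conjecture that the paper does \emph{not} prove; on the contrary, Theorem~\ref{thm:cubes} in Section~\ref{sect:cubes} shows it fails for the $d$-cube whenever $d\ge 8$ and $\alpha\in[2,d-2]$. Your proposal in fact reaches the same conclusion: after setting up an inductive scheme, you correctly recognize that for cubes the bound $h_\alpha\le 2^{d-\alpha}$ would force the existence of a strength-$(d-\alpha)$ binary orthogonal array of index one, which the Rao bound excludes for intermediate $\alpha$. So your ``proof attempt'' is really a disproof sketch, and its endpoint matches the paper's.

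The two arguments are the same combinatorics viewed from dual angles. You phrase the cube condition as a covering array of strength $d-\alpha$ and invoke the Rao bound; the paper observes that $2^{d-\alpha}$ illumination directions meeting every $\alpha$-face must (by a counting argument) form a binary code of minimum Hamming distance at least $\alpha+1$, and then invokes the Hamming sphere-packing bound. A size-$2^{d-\alpha}$ covering array of strength $d-\alpha$ is precisely such a code (bijectivity of every $(d-\alpha)$-coordinate projection is equivalent to any two codewords agreeing on at most $d-\alpha-1$ coordinates), so the reformulations coincide; the choice of Rao versus Hamming just yields slightly different explicit parameter ranges. Your Rao observation already kills $\alpha=d-2$ for $d\ge 4$, while the paper's Hamming computation handles all integer $\alpha\in[2,d-2]$ once $d\ge 8$.

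One side remark on the inductive plan itself: your base case $\alpha=0$ is the full Hadwiger conjecture for arbitrary polytopes, which is open, so even had the inductive step succeeded the argument would not have proved the conjecture for general $B$. This is moot, of course, given that the conjecture is false.
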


In Section \ref{sect:cubes}, we will show that this conjecture is in fact false for all $\alpha\in[2,d-2]$ for $d$-dimensional cubes when $d\geq 8$. We also note here that Conjecture \ref{conj:hadw+} is always true for $\alpha=d-1$ even for general convex bodies $B$. In order to prove this, it is sufficient to find two opposite directions, i.e. a linear subspace of dimension 1, such that the shadow boundary of $B$ with respect to this subspace has a zero $(d-1)$-dimensional Hausdorff measure. This result follows from the paper of Steenaerts \cite{ste85} mentioned above. The conjecture of Bezdek for $\alpha=1$ remains open.

\section{Lower bounds on covering numbers $g_\alpha$}\label{sect:low_bounds}

In this section we prove lower bounds on $g_\alpha(d)$. Although we do not know how to prove Conjecture \ref{conj:general}, we can still find out more about the sum of the powers of coefficients than the general asymptotic proved by Nasz{\'o}di \cite{nas10}.

\begin{theorem}\label{thm:const}
For $d\geq 3$ and any $\alpha\in[0,d]$, $g_\alpha(d) \geq d-\left\lceil{\alpha}\right\rceil \ln^2 d$.
\end{theorem}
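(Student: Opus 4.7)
The plan is to perform a dichotomy on the coefficients $\lambda_i$ in any given translative covering $B\subseteq\bigcup_{i=1}^k(\lambda_i B+x_i)$, separating them at a carefully chosen threshold $\lambda^{\ast}=1-\ln^2 d/d$, and then combining a Bernoulli-type lower bound on the ``large'' pieces with a quantitative version of the shadow-boundary argument from the proof of Theorem~\ref{thm:genlowbound} applied to the ``small'' ones. I first reduce to integer $\alpha$: since $\lambda_i\in(0,1)$ and $\alpha\leq\lceil\alpha\rceil$, we have $\lambda_i^\alpha\geq\lambda_i^{\lceil\alpha\rceil}$, so $g_\alpha(B)\geq g_{\lceil\alpha\rceil}(B)$; moreover the inequality is vacuous unless $\alpha\leq d/\ln^2 d$, which I assume henceforth.

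Set $L=\{i:\lambda_i\geq\lambda^{\ast}\}$ and $S=\{i:\lambda_i<\lambda^{\ast}\}$. If $|L|\geq d$, then by Bernoulli's inequality $(\lambda^{\ast})^\alpha\geq 1-\alpha\ln^2 d/d$, so
$$\sum_i\lambda_i^\alpha\ \geq\ |L|(\lambda^{\ast})^\alpha\ \geq\ d\Bigl(1-\tfrac{\alpha\ln^2 d}{d}\Bigr)\ =\ d-\alpha\ln^2 d,$$
and we are done. If instead $|L|\leq d-1$, then the Levi--Hadwiger bound $g_0(B)\geq d+1$ forces small pieces to be present, and I argue that they contribute enough. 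The necessary condition $x_i\cdot u\geq(1-\lambda_i)h_B(u)$ at each outer normal $u$ of $\partial B$ shows that the ``support caps'' $A_i=\{u\in S^{d-1}:x_i\cdot u\geq(1-\lambda_i)h_B(u)\}$ collectively cover $S^{d-1}$. Adapting the shadow-boundary / positive-span argument from the proof of Theorem~\ref{thm:genlowbound}, the large-piece caps $\{A_i\}_{i\in L}$ alone miss a subset of $S^{d-1}$ whose Hausdorff $\alpha$-content is positive (by the co-dimension count $d-|L|\geq d-(d-1)=1\geq\lceil\alpha\rceil-\alpha+1$ after the integer reduction). Since each small piece $i\in S$ has $\lambda_i<\lambda^{\ast}$, its cap $A_i$ has angular radius at most $\arccos((1-\lambda^{\ast})/(1+\lambda^{\ast}))$; covering the missed set by these caps, together with the bound $\lambda_i^\alpha\geq\lambda_i^d/(\lambda^{\ast})^{d-\alpha}$, gives a Hausdorff-content inequality forcing $\sum_{i\in S}\lambda_i^\alpha$ to make up the deficit, yielding $\sum_i\lambda_i^\alpha\geq d-\alpha\ln^2 d$ in this case as well.

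The main obstacle will be converting the qualitative shadow-boundary statement of Theorem~\ref{thm:genlowbound} (which asserts only the existence of an uncovered set of the required dimension) into a quantitative Hausdorff-content lower bound sharp enough to be paired with a covering inequality for the small-piece caps. Because a piece of large $\lambda_i$ may stick far out of $B$, the corresponding cap can be as large as a near-hemisphere, so the volume argument $\sum_{i\in S}\lambda_i^d\geq 1-\sum_{i\in L}\lambda_i^d$ is useless when $|L|\geq 1$, and one really needs the boundary-oriented cap-covering formulation rather than a purely volumetric one. The threshold $\lambda^{\ast}=1-\ln^2 d/d$ is chosen precisely so that both contributions in the two cases produce error terms on the order of $\lceil\alpha\rceil\ln^2 d$, which is what appears in the final bound.
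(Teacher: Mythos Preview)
Your proposal is a plan, not a proof, and you say so yourself: the entire case $|L|\le d-1$ rests on ``converting the qualitative shadow-boundary statement of Theorem~\ref{thm:genlowbound} into a quantitative Hausdorff-content lower bound,'' which you never carry out. Two concrete obstructions show that this step is not a routine detail. First, Theorem~\ref{thm:genlowbound} is about illumination \emph{directions}; your caps $A_i=\{u:x_i\cdot u\ge(1-\lambda_i)h_B(u)\}$ are a different object, and a single large piece ($\lambda_i$ near $1$, $x_i$ unrestricted) can produce a cap that is nearly a hemisphere, so $d-1$ such caps may leave uncovered an arbitrarily small region of $S^{d-1}$---there is no uniform lower bound on its $\alpha$-content coming from the codimension count alone. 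Second, even granting positive $\alpha$-content for the missed set, the sentence ``covering the missed set by these caps, together with $\lambda_i^\alpha\ge\lambda_i^d/(\lambda^\ast)^{d-\alpha}$, gives a Hausdorff-content inequality forcing $\sum_{i\in S}\lambda_i^\alpha$ to make up the deficit'' contains no actual inequality: you never write down anything that would yield $\sum_{i\in S}\lambda_i^\alpha\ge d-\alpha\ln^2 d-|L|(\lambda^\ast)^\alpha$, and it is not clear any such bound follows.

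The paper avoids this difficulty entirely by induction on the dimension. From the surface-area inequality $\sum_i\lambda_i^{d-1}\ge1$ one gets $\sum_i\lambda_i^\alpha\ge\lambda_1^{-(d-\alpha-1)}$ where $\lambda_1$ is the largest coefficient; if $\lambda_1$ is small this already gives the bound. If $\lambda_1$ is large, project orthogonally to the line through the centre of $B$ and the homothety centre of the largest piece: that piece projects strictly inside the projection of $B$, so the remaining pieces must cover the boundary of a $(d-1)$-dimensional convex body, and the induction hypothesis gives $\sum_{i\ge2}\lambda_i^\alpha\ge(d-1)-\alpha\ln^2(d-1)$. Adding back $\lambda_1^\alpha$ (bounded below because $\lambda_1$ is large) and a short elementary estimate closes the induction. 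The point is that the projection removes exactly one homothet and drops the dimension by one, so no quantitative content estimate is ever needed.
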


\begin{proof}
It is sufficient to consider only integer $\alpha$. We will prove a slightly stronger statement. Instead of coverings of a convex body $B$, we will consider families of its homothets covering the boundary of $B$ and we will prove this lower bound for the sum of $\alpha$-powers of homothety coefficients for such families.

The proof is done by induction in $d$. The base is for $d=3$. For $\alpha=0$, at least 4 homothets are needed to cover the boundary of $B$. For $\alpha=1$, we can project everything to an arbitrary plane and,  from the proof of Soltan's conjecture for $d=2$ \cite{sol93}, use $g_1(2)\geq 2>3-\ln^2 3$. We know that the sum of surface areas of convex bodies is not smaller than the surface area of a convex body they cover. Using this with the inequality $1>3-2\ln^2 3$ we get the statement for $\alpha=2$. For $\alpha=3$, the right hand side of the inequality is negative. Altogether, the base of induction for $d=3$ is true.

For $\alpha$ such that $d-\alpha \ln^2 d\leq 1$, the statement holds immediately so we assume that $\alpha < \frac{d-1}{\ln^2 d}$. As mentioned earlier, the sum of surface areas of convex bodies is not smaller than the surface area of a convex body they cover. Hence homothety coefficients of the covering $\lambda_1$,$\ldots$, $\lambda_k$ satisfy $\sum\limits_{i=1}^k \lambda_i^{d-1} \geq 1$. Assume $\lambda_1$ is the largest of these coefficients. Then

$$\sum\limits_{i=1}^k \lambda_i^\alpha \geq \frac {\sum\limits_{i=1}^k \lambda_i^{d-1}} {\lambda_1^{d-\alpha-1}} \geq \frac {1} {\lambda_1^{d-\alpha-1}}.$$
If $\lambda_1^{d-\alpha-1} \leq \frac 1 {d-\alpha \ln^2 d}$ holds, the statement of the theorem is true. Hence it is sufficient to consider the case when $\lambda_1^{d-\alpha-1} > \frac 1 {d-\alpha \ln^2 d}\geq\frac 1 d$.

Let $l$ be the line connecting an arbitrary point in the interior of $B$ with its homothetic image in the largest homothet of the covering. We consider the orthogonal projection to $l^{\perp}$. Projections of homothets are homothets of the projection. Only those homothets which cover the boundary of the projection are being considered. By the induction hypothesis, the sum of the $\alpha$-powers of the homothety coefficients should be at least $(d-1)-\alpha\ln^2(d-1)$. Note that the projection of the largest homothet is strictly inside the projection of $B$ so it does not participate in covering the boundary of the projection of $B$. Hence we get

$$\sum\limits_{i=1}^k \lambda_i^\alpha \geq \lambda_1^\alpha + (d-1)-\alpha\ln^2(d-1) > \left(\frac 1 d \right)^{\frac {\alpha}{d-\alpha-1}}+(d-1)-\alpha\ln^2(d-1).$$

It remains to show that

$$\left(\frac 1 d \right)^{\frac {\alpha}{d-\alpha-1}}+(d-1)-\alpha\ln^2(d-1) \geq d-\alpha \ln^2 d;$$

$$\left(\frac 1 d \right)^{\frac {\alpha}{d-\alpha-1}}+\alpha(\ln^2 d-\ln^2(d-1))\geq 1.$$
For $d\geq 4$, $\ln^2 d-\ln^2(d-1) \geq \frac {2\ln d} d$. Hence it is sufficient to prove

$$\left(\frac 1 d \right)^{\frac {\alpha}{d-\alpha-1}} \geq 1- \frac {2\alpha\ln d} d.$$
Using $1- \frac {2\alpha\ln d} d \leq e^{- \frac {2\alpha\ln d} d } = \frac 1 {d^{2\alpha/d}}$, we are left with

$$\left(\frac 1 d \right)^{\frac 1 {d-\alpha-1}} \geq \left(\frac 1 d\right)^{\frac 2 d};$$

$$d-\alpha-1\geq \frac d 2,$$
which is true for any $d\geq 6$ and any real $\alpha< \frac{d-1}{\ln^2 d}$. For $d=4, 5$ we use that $\alpha$ must be integer and only $\alpha=0, 1$ satisfy the inequality $\alpha< \frac{d-1}{\ln^2 d}$ so the inequality above is true in these cases as well.
\end{proof}

We note that this result generalizes the result of Nasz{\'o}di \cite{nas10} since, for a fixed $\alpha$, $\lim\limits_{d\rightarrow\infty} \frac{d-\alpha \ln^2 d} d =1$, and also covers certain cases when $\alpha$ depends on $d$, for instance, $\alpha\sim d^c$ for all $c$ from $(0,1)$.

Due to Theorems \ref{thm:h-alpha} and \ref{thm:const} and Corollary \ref{cor:ball}, we know that $g_\alpha(d)$ is always squeezed between $d-\left\lceil{\alpha}\right\rceil\ln^2 d$ and $d+1-\left\lceil{\alpha}\right\rceil$. Generally, we conjecture (see Conjecture \ref{conj:general} in the introduction) that $g_\alpha(d)$ is precisely $d+1-\left\lceil{\alpha}\right\rceil$.

The following proposition, in some sense, confirms Conjecture \ref{conj:general} by showing that if the lower bound works for some convex body $P$ then it works for all direct products of $P$ and any other convex body for a certain range of values of $\alpha$.

\begin{proposition}\label{prop:direct}
For any $\alpha\in[0,d]$, a $d$-dimensional convex body $P$ and an $m$-dimensional convex body $Q$, $$g_{\alpha+m}(P\times Q)\geq g_\alpha (P).$$
\end{proposition}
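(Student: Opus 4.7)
The plan is to exploit the product structure: a translative homothet of $P\times Q$ is automatically a product of translative homothets of $P$ and of $Q$, so slicing along fibers over $Q$ produces coverings of $P$ whose coefficients we can then average.

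Concretely, I would fix any finite covering $P\times Q\subseteq \bigcup_{i=1}^k \lambda_i(P\times Q)+(a_i,b_i)$ with $\lambda_i\in(0,1)$, $(a_i,b_i)\in\mathbb{R}^d\times\mathbb{R}^m$. Observe that $\lambda_i(P\times Q)+(a_i,b_i)=(\lambda_iP+a_i)\times(\lambda_iQ+b_i)$. For each $q\in Q$, define the active index set
$$I(q)=\{\,i:q\in\lambda_iQ+b_i\,\}.$$
The slice $P\times\{q\}$ is covered by the product pieces with $i\in I(q)$, so projecting to the first factor yields a translative covering $P\subseteq\bigcup_{i\in I(q)}(\lambda_iP+a_i)$ with coefficients $\lambda_i\in(0,1)$. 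By the definition of $g_\alpha(P)$, this forces
$$\sum_{i\in I(q)}\lambda_i^{\alpha}\geq g_\alpha(P)\qquad\text{for every }q\in Q.$$

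Next I would integrate this pointwise inequality over $Q$ and swap sum and integral (Fubini), using $\mathrm{vol}(Q\cap(\lambda_iQ+b_i))\leq\mathrm{vol}(\lambda_iQ)=\lambda_i^{m}\,\mathrm{vol}(Q)$:
$$g_\alpha(P)\,\mathrm{vol}(Q)\leq\int_Q\sum_{i\in I(q)}\lambda_i^{\alpha}\,dq=\sum_{i=1}^k\lambda_i^{\alpha}\,\mathrm{vol}\bigl(Q\cap(\lambda_iQ+b_i)\bigr)\leq \mathrm{vol}(Q)\sum_{i=1}^k\lambda_i^{\alpha+m}.$$
Dividing by $\mathrm{vol}(Q)>0$ and taking the infimum over coverings of $P\times Q$ yields $g_{\alpha+m}(P\times Q)\geq g_\alpha(P)$.

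I do not foresee a real obstacle: once one notices that homothets of a product are products of homothets, the fiber argument is forced and the rest is just a volume count. The only points that require minor care are checking that each $I(q)$ is nonempty and finite (immediate from the covering being finite and $P\times\{q\}\neq\varnothing$), and that the coefficients appearing in the sliced covering remain strictly less than $1$ (they are inherited from the original covering). Everything else is routine Fubini, and no hypothesis on $\alpha$ beyond $\alpha\in[0,d]$ is used except to guarantee that $g_\alpha(P)$ is the quantity on the right hand side.
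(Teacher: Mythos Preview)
Your argument is correct and is essentially the same as the paper's: slice over fibers $P\times\{q\}$, invoke the definition of $g_\alpha(P)$ pointwise, and integrate over $Q$ using Fubini. If anything, your version is slightly more careful, since you bound $\mathrm{vol}(Q\cap(\lambda_iQ+b_i))\leq\lambda_i^m\,\mathrm{vol}(Q)$ rather than tacitly treating the integral of the indicator over $Q$ as the full volume of the homothet.
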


\begin{proof}
Let $P\times Q$ be covered by $k$ smaller homothets $P_i\times Q_i$ with respective coefficients $\lambda_i$, $1\leq i\leq k$. Consider the indicator functions $\mathbbm{1}_i: Q\mapsto \{0,1\}$, $1\leq i\leq k$, such that $\mathbbm{1}_i(x)=1$ if $x\in Q_i$ and $\mathbbm{1}_i(x)=0$ otherwise. For a fixed $x\in Q$, the set of homothets such that $x\in Q_i$ forms a covering of $P\times \{x\}$. Therefore, $\sum\limits_{i=1}^k \mathbbm{1}_i (x) \lambda_i^\alpha \geq g_\alpha (P)$ for any $x\in Q$. Integrating this inequality over all points $x\in Q$ we get

$$\sum\limits_{i=1}^k \text{Vol}(Q_i) \lambda_i^\alpha \geq \int\limits_{x\in Q} \left(\sum\limits_{i=1}^k \mathbbm{1}_i (x) \lambda_i^\alpha\right) dx \geq \int\limits_{x\in Q} g_\alpha (P) dx = g_\alpha (P)\, \text{Vol}(Q).$$
Since $\text{Vol}(Q_i)=\lambda_i^m \text{Vol}(Q)$, we conclude that

$$\sum\limits_{i=1}^k \lambda_i^{\alpha+m} \geq g_{\alpha} (P).$$
\end{proof}

From this proposition, if Conjecture \ref{conj:general} is true for a $d$-dimensional convex body $P$ and $\alpha$, i.e. $g_\alpha (P)\geq d+1-\left\lceil{\alpha}\right\rceil$, then $g_{\alpha+m}(P\times Q) \geq d+1-\left\lceil{\alpha}\right\rceil = (d+m)+1-\left\lceil{\alpha+m}\right\rceil$ so Conjecture \ref{conj:general} is true for $P\times Q$ and $\alpha+m$.

\section{Generalized Hadwiger conjecture and cubes}\label{sect:cubes}

In this section we show that Conjecture \ref{conj:hadw+} does not generally hold for cubes.

\begin{theorem}\label{thm:cubes}
For any $d\geq 8$ and any positive integer $\alpha\in[2,d-2]$, $h_{\alpha} ([0,1]^d)>2^{d-\alpha}$.
\end{theorem}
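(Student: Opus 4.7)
My plan is to identify $h_{\alpha}([0,1]^d)$ with a covering-array/coding-theoretic quantity and then rule out the extremal configuration via classical bounds. The reduction begins by observing that a small perturbation of any illuminating direction $u \in \mathbb{R}^d$ can make every coordinate nonzero while only enlarging the set of illuminated faces (illumination of a face is an open condition on the direction, so perturbations never remove illumination). For such a generic $u$, the faces of $[0,1]^d$ illuminated by $u$ depend only on the sign pattern $\operatorname{sgn}(u) \in \{\pm 1\}^d$: the relative interior of a face $F$ with fixed-coordinate set $I_F$ and sign pattern $\sigma_F$ is illuminated iff $\operatorname{sgn}(u)|_{I_F} = \sigma_F$.

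The next step is to reformulate the problem combinatorially. Faces of dimension at least $\alpha$ correspond exactly to subsets $I_F \subseteq [d]$ with $|I_F| \leq d-\alpha$, so the unilluminated set has Hausdorff dimension less than $\alpha$ iff every such face is hit by some direction. Therefore $h_{\alpha}([0,1]^d)$ equals the minimum cardinality of $S \subseteq \{\pm 1\}^d$ whose projection onto every $(d-\alpha)$-element subset of coordinates surjects onto $\{\pm 1\}^{d-\alpha}$. A single projection already yields $|S| \geq 2^{d-\alpha}$. If equality held, every $(d-\alpha)$-projection would be a bijection; equivalently, no two distinct elements of $S$ agree on any $(d-\alpha)$-subset, so $S$ (viewed as a binary code of length $d$ with $2^{d-\alpha}$ codewords) would have minimum Hamming distance at least $\alpha+1$, and by the Singleton bound it would be MDS with parameters $(d, 2^{d-\alpha}, \alpha+1)$.

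The heart of the proof is to exclude such a binary MDS code for $d \geq 8$ and $\alpha \in [2, d-2]$, which I would do in two overlapping regimes. When $2(\alpha+1) > d$, the Plotkin bound gives $|S| \leq 2\lfloor (\alpha+1)/(2\alpha+2-d) \rfloor$, which is readily checked to be strictly less than $2^{d-\alpha}$ throughout the required range. When $2(\alpha+1) \leq d$, the Hamming/sphere-packing bound $|S| \cdot \sum_{i=0}^{\lfloor \alpha/2 \rfloor} \binom{d}{i} \leq 2^d$ forces the inequality $\sum_{i=0}^{\lfloor \alpha/2 \rfloor} \binom{d}{i} \leq 2^\alpha$, and the hypotheses $d \geq 8$ and $\alpha \in [2, d/2]$ make this fail. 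Since the two regimes overlap near $\alpha \approx d/2$, every $\alpha \in [2, d-2]$ is covered and no such MDS code can exist.

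The main obstacle is precisely this final coding-theoretic elimination: one must verify both inequalities across the whole parameter range, with some casework at the crossover $\alpha \approx d/2$. The hypothesis $d \geq 8$ enters essentially to keep the Hamming bound strict in the delicate small-$\alpha$ cases (notably $\alpha = 3$, where $V(d,1) = d+1$ must exceed $2^\alpha = 8$). By contrast, the perturbation, the face-enumeration, and the Singleton reduction are all routine; the genuine work lies in piecing Plotkin and Hamming together to rule out binary MDS codes across the entire admissible range.
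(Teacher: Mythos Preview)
Your proposal is correct and follows essentially the same route as the paper: reduce illumination directions to sign vectors (equivalently, vertices of the cube), observe that illuminating every $\alpha$-face forces the projection of the chosen sign-set onto every $(d-\alpha)$-tuple of coordinates to be surjective, and conclude that equality $|S|=2^{d-\alpha}$ would produce a binary code of length $d$, size $2^{d-\alpha}$, and minimum distance at least $\alpha+1$, i.e.\ a Singleton-tight (MDS) code.

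The only point of divergence is the final elimination. The paper uses the Hamming bound \emph{alone} across the full range $\alpha\in[2,d-2]$, reducing to $\sum_{i\le\lfloor\alpha/2\rfloor}\binom{d}{i}>2^{\alpha}$ and verifying this by monotonicity in $d$ together with explicit checks at $d=8$ for $\alpha\in\{2,3,4,5\}$ and at $d=\alpha+2$ for $\alpha\ge 6$ (with an even/odd split). Your plan instead splits into a Plotkin regime for $2(\alpha+1)>d$ and a Hamming regime for $2(\alpha+1)\le d$. Both arguments are valid; your Plotkin half makes the large-$\alpha$ cases more transparent at the cost of handling two bounds, while the paper's approach is uniform but requires a bit more case-checking. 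Either way the hypothesis $d\ge 8$ is dictated by the $\alpha=3$ case, where one needs $d+1>2^{3}$. (A still shorter endgame, which neither you nor the paper invokes, is the classification of binary MDS codes: over $\mathbb{F}_2$ the only parameters $(d,2^{d-\alpha},\alpha+1)$ that occur are $\alpha\in\{0,1,d-1,d\}$.)
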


\begin{proof}
Consider an arbitrary unit vector $a=(a_1,\ldots,a_d)$. The set of points on the surface of the cube illuminated by the direction of $a$ is completely defined by the signs of coordinates of $a$. Without loss of generality we assume $a_1,\ldots,a_l>0$, $a_{l+1},\ldots, a_m<0$, $a_{m+1}=\ldots=a_d=0$. Then the interior points of the $(d-m)$-dimensional face $T_a$ of the cube with coordinates $(t_1,\ldots,t_d)$ such that $t_1=\ldots=t_l=0$, $t_{l+1}=\ldots=t_m=1$, $t_{m+1},\ldots,t_d\in(0,1)$ are illuminated and all interior points of faces containing $T_a$ are illuminated as well. No other points are illuminated by the direction of $a$.

If, instead of the vector $a$, we use a unit vector $a'$ such that $a'_1,\ldots,a'_l>0$, $a'_{l+1},\ldots, a'_d<0$, the direction will illuminate at least as much on the surface of the cube as the direction of $a$ so we may assume that $a$ has no 0 coordinates. In this case $T_a$ is a vertex of the cube. Therefore, each illumination direction corresponds to the vertex defined by the signs of the vector of illumination. We consider the set of $h$ directions illuminating all interior points of $\alpha$-dimensional faces and denote their corresponding vertices by $x_1,\ldots,x_h$. We assume $h\leq 2^{d-\alpha}$ to get a contradiction at the end.

We would like all interior points of $\alpha$-dimensional faces to be illuminated so each $\alpha$-dimensional face contains at least one of $x_i$, $1\leq i\leq h$. At this point we note that the number of $\alpha$-faces of the $d$-dimensional cube is $\binom{d} {d-\alpha} 2^{d-\alpha}$ (the choice of $d-\alpha$ coordinates from $d$ options times the choice of 0 and 1 for each of these coordinates). The number of $\alpha$-dimensional faces containing a given vertex is $\binom{d} {d-\alpha}$ (the choice of $d-\alpha$ coordinates). Since $h\leq 2^{d-\alpha}$, we get that there is no $\alpha$-face containing at least two of the points $x_i$ and $h$ is precisely $2^{d-\alpha}$. This means that for any two points $x_i$ and $x_j$, $i\neq j$, $|x_i-x_j|\geq \alpha+1$ (in the Hamming metric) because otherwise they would both belong to the same $\alpha$-face defined by the $d-\alpha$ coordinates where they coincide.

The set of points $x_1,\ldots,x_h$ forms a binary code with $h=2^{d-\alpha}$ points and the minimal distance $\geq \alpha+1$. We can use any packing bounds for binary codes to check whether this is possible. One of the standard bounds is the Hamming bound claiming that the size of a binary $d$-dimensional code with the minimum Hamming distance $l$ is no greater than $\frac {2^d} {\sum\limits_{i=0}^t \binom{d}{i}}$, where $t=\left\lfloor\frac {l-1} 2\right\rfloor$ \cite{ham50}. Using this bound for our set of points we get

$$2^{d-\alpha}\leq \frac {2^d} {\sum\limits_{i=0}^{\left\lfloor\frac {\alpha} 2\right\rfloor} \binom{d}{i}};$$

$$\sum\limits_{i=0}^{\left\lfloor\frac {\alpha} 2\right\rfloor} \binom{d}{i} \leq 2^{\alpha}.$$

We want to get a contradiction and show that the inequality does not hold for $d\geq 8$ and $2\leq\alpha\leq d-2$. Since $\sum\limits_{i=0}^{\left\lfloor\frac {\alpha} 2\right\rfloor} \binom{d}{i}$ is an increasing function of $d$, it is sufficient to check that, for the five cases: 1) $\alpha=2$ and $d=8$, 2) $\alpha=3$ and $d=8$, 3) $\alpha=4$ and $d=8$, 4) $\alpha=5$ and $d=8$, and 5) $\alpha\geq 6$ and $d=\alpha+2$,

$$\sum\limits_{i=0}^{\left\lfloor\frac {\alpha} 2\right\rfloor} \binom{d}{i} > 2^{\alpha}.$$

In the first case, $\sum\limits_{i=0}^{1} \binom{8}{i} = 9 > 4 = 2^2$. In the second case, $\sum\limits_{i=0}^{1} \binom{8}{i} = 9 > 8 = 2^3$. In the third case, $\sum\limits_{i=0}^{2} \binom{8}{i} = 37 > 16 = 2^4$. In the fourth case, $\sum\limits_{i=0}^{2} \binom{8}{i} = 37 > 32 = 2^5$.

In the fifth case, we prove the inequality separately for even and odd $\alpha$. For the first part, assume $\alpha$ is even so $\alpha=2l$ and $d=2l+2$, $l\geq 3$. We want to prove $\sum\limits_{i=0}^{l} \binom{2l+2}{i} > 2^{2l}.$ Due to the symmetry of binomial coefficients, $\sum\limits_{i=0}^{l} \binom{2l+2}{i} = \frac 1 2 2^{2l+2} - \frac 1 2 \binom{2l+2}{l+1}$. Hence it is sufficient to show that $\binom{2l+2}{l+1}<2^{2l+1}$ for all $l\geq 3$. This inequality is true for $l=3$ and the inductive argument $\binom{2l+4}{l+2}< 4\binom{2l+2}{l+1}$ implies it is true for all larger $l$ as well.

In the second part, we assume $\alpha$ is odd so $\alpha=2l+1$, $d=2l+3$, $l\geq 3$ and we want to prove $\sum\limits_{i=0}^{l} \binom{2l+3}{i} > 2^{2l+1}.$ Here we use that $\sum\limits_{i=0}^{l} \binom{2l+3}{i} = \frac 1 2 2^{2l+3} - \binom{2l+3}{l+1}$ so it is sufficient to prove that $\binom{2l+3}{l+1}<2^{2l+1}$ for all $l\geq 3$. This inequality is true for $l=3$ and, similarly to the previous case, $\binom{2l+5}{l+2}< 4\binom{2l+3}{l+1}$ implies it is true for all larger $l$ too.

\end{proof}

Apart from giving a counterexample to Conjecture \ref{conj:hadw+}, Theorem \ref{thm:cubes} also shows that $h_{\alpha}$ is not always equal to $g_\alpha$. Due to Proposition \ref{prop:direct}, we know that for any integer $\alpha\in[0,d]$, $g_{\alpha} ([0,1]^d)\geq g_0 ([0,1]^{d-\alpha}) = 2^{d-\alpha}$. On the other hand, we can always cover a unit $d$-dimensional cube by $2^d$ twice smaller cubes so $g_{\alpha}([0,1]^d)\leq 2^d \left(\frac 1 2\right)^{\alpha} = 2^{d-\alpha}$. From these two observations, we conclude that $g_{\alpha}([0,1]^d)=2^{d-\alpha}$. By Theorem \ref{thm:cubes}, $h_{\alpha}([0,1]^d)>2^{d-\alpha}$ so $h_{\alpha}([0,1]^d)$ is not equal to $g_\alpha([0,1]^d)$ for almost all values of $\alpha$ and $d$.

What appeared to be false for illuminating numbers $h_{\alpha}$ still seems plausible for covering numbers $g_{\alpha}$ so we formulate the following generalized Hadwiger conjecture.

\begin{conjecture}\label{conj:hadw++}
For all $d$-dimensional convex bodies $B$ and all integer $\alpha\in[0,d]$, $g_\alpha(B)\leq 2^{d-\alpha}$.
\end{conjecture}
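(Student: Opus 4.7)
The statement subsumes the classical Hadwiger conjecture at $\alpha = 0$, so any complete proof would resolve a famously open problem. The extremal example $[0,1]^d$ is witnessed by the canonical covering by $2^d$ translates of $\tfrac{1}{2}[0,1]^d$, for which $\sum_i \lambda_i^\alpha = 2^d \cdot (\tfrac{1}{2})^\alpha = 2^{d-\alpha}$ for every integer $\alpha$; the goal is to attain a comparable bound for an arbitrary $B$. My plan is to split the range of $\alpha$ into two regimes and attack them with different tools.

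For $\alpha$ close to $d$, I would use a two-scale \emph{bulk plus shell} construction. Take a single large homothet $(1-\delta) B$ centered at an interior point of $B$, which covers everything except a thin shell of width $\sim \delta$; then cover the shell by many small homothets of ratio $\mu \sim \delta$. The bulk contribution $(1-\delta)^\alpha$ tends to $1$ as $\delta \to 0$, while the shell contribution has order $N\mu^\alpha$ with the covering count $N$ scaling as $1/\mu^{d-1}$. For $\alpha = d$ this directly yields $g_d(B) \leq 1$, matching the trivial volume lower bound; for $\alpha = d-1$ it yields $g_{d-1}(B) \leq 2$. For smaller $\alpha$ the shell contribution diverges as $\mu \to 0$, so the strategy must be refined or replaced.

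For intermediate and small $\alpha$, the natural plan is to combine a Hadwiger-type covering of $B$ by $N$ homothets of a fixed ratio $\lambda$ with the identity $\sum_i \lambda_i^\alpha = N\lambda^\alpha$. To match $2^{d-\alpha}$ at $\lambda = \tfrac{1}{2}$ one needs $N \leq 2^d$, which is exactly the Hadwiger conjecture. The Rogers--Shephard route gives only $g_0(B) \leq \binom{2d}{d} d \ln d\,(1+o(1))$, and even the sub-exponential improvement of \cite{hua18} stays above $2^d$, so this regime depends on Hadwiger. A partial version should be unconditional on the special classes where Hadwiger is known (centrally symmetric bodies in $\mathbb{R}^3$, smooth bodies, zonotopes, bodies with a smooth belt), and Proposition \ref{prop:direct} then lifts any such partial result to arbitrary direct products of these with other convex bodies. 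Iterating the bulk-plus-shell idea recursively on each shell piece could in principle interpolate between the two regimes; the main technical difficulty is to control the interaction across scales and absorb the combinatorial overhead.

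The dominant obstacle is therefore the small-$\alpha$ regime: no currently known technique separates Conjecture \ref{conj:hadw++} from the Hadwiger conjecture itself. Realistically, the plan delivers the endpoints $\alpha \in \{d-1, d\}$ for arbitrary $B$, unconditional proofs for the classes where Hadwiger is known, and an interpolation argument over an intermediate range whose width tracks the best available upper bound on $g_0$. A complete proof must await new ideas at $\alpha = 0$.
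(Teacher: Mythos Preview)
The statement is Conjecture~\ref{conj:hadw++}, and the paper offers no proof of it: it is posed as an open problem, explicitly as a replacement for Conjecture~\ref{conj:hadw+} after the latter is disproved for cubes. Your diagnosis that the case $\alpha=0$ is the classical Hadwiger conjecture, and that therefore a full proof is out of reach with current tools, is exactly the paper's own position. There is nothing to compare against on the level of a proof.

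Two points in your partial programme deserve correction. First, the bulk-plus-shell sketch does not actually deliver $g_{d-1}(B)\leq 2$. With $N$ shell pieces of ratio $\mu$ and $N$ of order $\mu^{-(d-1)}$, the shell contribution $N\mu^{d-1}$ is a constant depending on how efficiently small homothets of $B$ cover a neighbourhood of $\partial B$; there is no reason this constant is at most $1$ for a general $B$. The paper does obtain $g_{d-1}(B)\leq 2$, but through the illumination route: Theorem~\ref{thm:h-alpha} gives $g_{d-1}(B)\leq h_{d-1}(B)$, and the remarks after Conjecture~\ref{conj:hadw+} use Steenaerts' result on shadow boundaries to conclude $h_{d-1}(B)\leq 2$ for arbitrary convex $B$. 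That is the mechanism you would need here, not a naive shell count.

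Second, your invocation of Proposition~\ref{prop:direct} is in the wrong direction. That proposition says $g_{\alpha+m}(P\times Q)\geq g_\alpha(P)$; it transfers \emph{lower} bounds to direct products (in support of Conjecture~\ref{conj:general}), not upper bounds. Knowing $g_\alpha(P)\leq 2^{d-\alpha}$ on a special class does not, via Proposition~\ref{prop:direct}, yield any upper bound on $g_{\alpha+m}(P\times Q)$; indeed, a product of homothets $\lambda P\times\mu Q$ is a homothet of $P\times Q$ only when $\lambda=\mu$, so there is no obvious product construction for coverings either.
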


\section{Upper bounds on covering numbers $g_\alpha$}\label{sect:up_bounds}

In this section we prove upper bounds for $g_{\alpha}(B)$ using various approaches to the Hadwiger conjecture. As an easy application of the asymptotic bound for $g_0(B)$ \cite{hua18} and the trivial inequality $g_{\alpha}(B)\leq g_0(B)$, we immediately get the following proposition.

\begin{proposition}
There exist universal constants $c_1, c_2>0$ such that $$g_{\alpha}(B)\leq c_1 4^d e^{-c_2\sqrt{d}}$$ for any $d$-dimensional convex body $B$ and any positive number $\alpha$.
\end{proposition}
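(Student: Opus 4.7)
The plan is to chain the two observations flagged in the sentence immediately preceding the statement. First I would record the trivial monotonicity $g_\alpha(B)\leq g_0(B)$ for every $\alpha>0$: given any finite covering of $B$ by smaller homothets with coefficients $\lambda_1,\ldots,\lambda_k\in(0,1)$, each factor $\lambda_i^\alpha$ is strictly less than $1$, so $\sum_{i=1}^k \lambda_i^\alpha < k$. Taking a covering that approximates the infimum defining $g_0(B)$ therefore gives $g_\alpha(B)\leq g_0(B)$.

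Second, I would invoke the Huang--Slomka--Tkocz--Vritsiou bound from \cite{hua18}, which supplies a universal constant $c>0$ such that $g_0(B)\leq \binom{2d}{d}e^{-c\sqrt{d}}$ for every $d$-dimensional convex body $B$. Combined with the monotonicity above this gives $g_\alpha(B)\leq \binom{2d}{d}e^{-c\sqrt{d}}$, and then the crude central binomial estimate $\binom{2d}{d}\leq \sum_{k=0}^{2d}\binom{2d}{k}=4^d$ upgrades this to $g_\alpha(B)\leq 4^d e^{-c\sqrt{d}}$, matching the claimed form with $c_1=1$ and $c_2=c$.

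There is really no obstacle here: the proposition is a packaging step designed to make the $4^d$ barrier and the sub-exponential improvement of \cite{hua18} explicit in the $g_\alpha$ setting. The only subtlety worth noting is that the monotonicity step relies on both $\alpha>0$ and $\lambda_i<1$ being strict, which aligns with the hypothesis of the proposition and is precisely why the $\alpha=0$ case is not subsumed by this argument.
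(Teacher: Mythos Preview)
Your proposal is correct and is exactly the argument the paper has in mind: it states the proposition as an immediate consequence of the trivial inequality $g_\alpha(B)\leq g_0(B)$ together with the bound $g_0(B)\leq \binom{2d}{d}e^{-c\sqrt{d}}$ from \cite{hua18}, and you have simply spelled out these two steps (plus the estimate $\binom{2d}{d}\leq 4^d$). The only cosmetic point is that the constant $c_1$ is there to absorb whatever implicit constants or small-$d$ issues come with the cited bound, so asserting $c_1=1$ may be slightly optimistic depending on the exact formulation in \cite{hua18}.
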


This trivial consequence of the covering bound may be improved when $\alpha$ is large enough with respect to $d$.

\begin{theorem}
There exist universal constants $c_1, c_2>0$ such that $$g_{\alpha}(B)\leq c_1 \frac {d^d} {\alpha^\alpha (d-\alpha)^{d-\alpha}}2^d e^{-c_2\sqrt{d}}$$ for any $d$-dimensional convex body $B$ and any positive number $\alpha>\frac d 2$.
\end{theorem}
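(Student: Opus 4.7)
The plan is to cover $B$ by translates of a single smaller homothet $\lambda B$ and then optimize the common ratio $\lambda\in(0,1)$. If $N(\lambda)$ denotes the minimum number of translates of $\lambda B$ needed to cover $B$, then immediately $g_\alpha(B)\leq N(\lambda)\lambda^\alpha$, so the task reduces to finding a good upper bound for $N(\lambda)$ and then choosing $\lambda$ to minimize $N(\lambda)\lambda^\alpha$.

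The first step is to establish, for every $\lambda\in(0,1)$,
$$N(\lambda) \;\leq\; c_1\binom{2d}{d}\,e^{-c_2\sqrt{d}}\left(\frac{1+\lambda}{2\lambda}\right)^d,$$
following the classical Rogers recipe combined with the refinement of \cite{hua18}. Concretely: take an efficient lattice covering of $\mathbb{R}^d$ by translates of $\lambda\cdot\tfrac12(B-B)$ with density $\theta_d=O(d\log d)$; keep only translates that intersect $B$; and bound $\mathrm{vol}(B-\lambda B)/\mathrm{vol}(\lambda B)$ by $\binom{2d}{d}(1+\lambda)^d/(2\lambda)^d$ via Rogers--Shephard. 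The subexponential saving $e^{-c_2\sqrt{d}}$ comes from the saturation/thinning improvement of \cite{hua18}; in the limit $\lambda\to 1$ the factor $(1+\lambda)^d/(2\lambda)^d$ collapses to $1$ and one recovers their $g_0$-bound $c_1 4^d e^{-c_2\sqrt{d}}$.

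The second step is to optimize $N(\lambda)\lambda^\alpha$ in $\lambda$. Differentiating $d\log(1+\lambda)+(\alpha-d)\log\lambda$ produces the stationary point $\lambda^\star=(d-\alpha)/\alpha$, which lies in $(0,1)$ precisely when $\alpha>d/2$; this is exactly where the hypothesis enters. At $\lambda^\star$ one has $1+\lambda^\star=d/\alpha$ and $2\lambda^\star=2(d-\alpha)/\alpha$, so a direct substitution gives
$$\left(\frac{1+\lambda^\star}{2\lambda^\star}\right)^d(\lambda^\star)^\alpha \;=\; \frac{d^d}{2^d\,\alpha^\alpha(d-\alpha)^{d-\alpha}}.$$
Combining this with $\binom{2d}{d}\leq 4^d$ inside the covering bound yields the claimed estimate $g_\alpha(B)\leq c_1\,\frac{d^d}{\alpha^\alpha(d-\alpha)^{d-\alpha}}\,2^d e^{-c_2\sqrt{d}}$.

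The main obstacle is the first step: the Huang--Slomka--Tkocz--Vritsiou theorem in \cite{hua18} is formulated for $g_0$ only, so one must either revisit their thinning argument and verify that it applies uniformly in $\lambda\in(0,1)$, or extract $N(\lambda)$ as an intermediate quantity from their proof. Once this is in hand the Rogers--Shephard step and the final optimization are routine.
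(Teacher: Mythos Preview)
Your proposal is correct and follows essentially the same approach as the paper: cover $B$ by translates of a single $\lambda B$, use the Huang--Slomka--Tkocz--Vritsiou machinery to bound $N(\lambda)$, and optimize at $\lambda=(d-\alpha)/\alpha$. The obstacle you flag is exactly what the paper addresses: rather than reworking the thinning argument, it quotes an intermediate inequality from the proof of \cite[Theorem 1.1]{hua18}, namely that $B$ can be covered by at most $\left(\frac{1+\beta\lambda}{\beta\lambda}\right)^d 2^d e^{-c\sqrt d}\bigl(1+d\ln\tfrac{8}{(1-\beta)\lambda}\bigr)$ translates of $\lambda B$ for any $\beta\in(0,1)$, then sets $\beta=1-\tfrac1d$ (so the extra log factor is polynomial and absorbed into $c_1,c_2$) and $\lambda=(d-\alpha)/\alpha$ to finish.
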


\begin{proof}

In order to prove this theorem we use the statement following directly from \cite[Proof of Theorem 1.1, p. 9]{hua18}:
there is a universal constant $c>0$ such that for any $d$-dimensional convex body $B$, any $\lambda\in(0,1)$ and an arbitrary parameter $\beta\in(0,1)$, $B$ can be covered by no more than

$$\left(\frac {1+\beta\lambda} {\beta\lambda} \right)^d 2^d e^{-c\sqrt{d}} \left(1+d\ln\left( \frac 8 {(1-\beta)\lambda}\right) \right)$$
parallel translates of $\lambda B$. 

This statement immediately implies that
$$g_{\alpha}(B)\leq \lambda^{\alpha} \left(\frac {1+\beta\lambda} {\beta\lambda} \right)^d 2^d e^{-c\sqrt{d}} \left(1+d\ln\left( \frac 8 {(1-\beta)\lambda}\right) \right)$$
for any $\lambda, \beta\in (0,1)$.

Choosing $\beta=1-\frac 1 d$ and $\lambda=\frac {d-\alpha} {\alpha}$ we get the required bound.
\end{proof}

A similar improvement is possible in the case of centrally symmetric convex bodies. For the following theorem we use the smallest density $\theta(B)$ of a covering of the whole space $\mathbb{R}^d$ by translates of $B$.

\begin{theorem}\label{thm:sym1}
$$g_{\alpha}(B)\leq \frac {d^d} {\alpha^\alpha (d-\alpha)^{d-\alpha}}\theta(B)$$ for any $d$-dimensional convex centrally symmetric body $B$ and any positive number $\alpha>\frac d 2$.
\end{theorem}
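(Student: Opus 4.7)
The plan is to follow the same template as the preceding theorem but feed in a sharper covering-by-translates bound that is available specifically in the centrally symmetric setting. The key classical fact (essentially due to Rogers) is that for any centrally symmetric convex body $B$ and any $\lambda\in(0,1)$, the body $B$ can be covered by at most $\theta(B)\bigl(\tfrac{1+\lambda}{\lambda}\bigr)^d$ translates of $\lambda B$. I would first recall its one-line justification: take a covering of $\mathbb{R}^d$ by translates of $\lambda B$ of density arbitrarily close to $\theta(\lambda B)=\theta(B)$, and keep only those translates that meet $B$. The centers of such translates lie in $B+\lambda(-B)=B+\lambda B=(1+\lambda)B$ (using central symmetry), so by a standard density counting argument their number is at most $\theta(B)\operatorname{vol}((1+\lambda)B)/\operatorname{vol}(\lambda B)=\theta(B)\bigl(\tfrac{1+\lambda}{\lambda}\bigr)^d$.

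Since all coefficients in this covering equal $\lambda$, the definition of $g_{\alpha}$ yields
$$g_{\alpha}(B)\leq \theta(B)\,\lambda^{\alpha-d}(1+\lambda)^d\qquad\text{for every }\lambda\in(0,1).$$
The remainder of the proof is an elementary one-variable optimization. Setting $f(\lambda)=\lambda^{\alpha-d}(1+\lambda)^d$ and differentiating $\log f$, the critical point is the solution of $(\alpha-d)(1+\lambda)+d\lambda=0$, i.e.\ $\lambda^{*}=\tfrac{d-\alpha}{\alpha}$. Crucially, this lies in $(0,1)$ precisely when $\alpha>\tfrac{d}{2}$, which is exactly the hypothesis of the theorem and explains where that restriction comes from.

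Substituting $\lambda^{*}=\tfrac{d-\alpha}{\alpha}$, and hence $1+\lambda^{*}=\tfrac{d}{\alpha}$, into $f(\lambda)$ gives
$$f(\lambda^{*})=\left(\frac{\alpha}{d-\alpha}\right)^{d-\alpha}\!\left(\frac{d}{\alpha}\right)^{d}=\frac{d^{d}}{\alpha^{\alpha}(d-\alpha)^{d-\alpha}},$$
which yields the desired bound. There is essentially no genuine obstacle here once the right covering estimate is in hand; the only small care needed is to verify that the Rogers-style bound may be applied with density arbitrarily close to $\theta(B)$ (by choosing a sufficiently good covering of $\mathbb{R}^d$), so that the resulting inequality for $g_{\alpha}(B)$ holds with the sharp constant $\theta(B)$ rather than $\theta(B)+o(1)$.
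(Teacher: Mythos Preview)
Your proof is correct and follows essentially the same approach as the paper: both invoke the Rogers--Zong covering bound $N(B,\lambda B)\le \theta(\lambda B)\,\frac{\mathrm{Vol}(B-\lambda B)}{\mathrm{Vol}(\lambda B)}=\theta(B)\bigl(\tfrac{1+\lambda}{\lambda}\bigr)^d$ in the centrally symmetric case and then substitute $\lambda=\tfrac{d-\alpha}{\alpha}$. Your version adds the explicit optimization showing this $\lambda$ is the minimizer (and lies in $(0,1)$ exactly when $\alpha>\tfrac{d}{2}$), while the paper simply cites \cite{rog97} and plugs in the optimal $\lambda$; the only caveat is that your heuristic ``density counting in $(1+\lambda)B$'' really needs the averaging/translation argument behind the Rogers--Zong result, which you correctly flag at the end.
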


\begin{proof}
We use one of the main results of \cite{rog97}: any convex $d$-dimensional body $B$ can be covered by no more than $\frac {\text{Vol} (B-H)} {\text{Vol}(H)}\theta(H)$ translates of a convex $d$-dimensional body $H$. Here we use this statement for a centrally symmetric $B$ and $H=\lambda B$ so $\theta(H)=\theta(B)$. Therefore, for any $\lambda\in(0,1)$,

$$g_{\alpha}(B)\leq \lambda^{\alpha} \frac {(1+\lambda)^d} {\lambda^d} \theta(B).$$

Taking $\lambda=\frac {d-\alpha} {\alpha}$ we prove the bound of the theorem.
\end{proof}

As a consequence of Theorem \ref{thm:sym1}, Rogers' bound for the covering density implies that $g_{\alpha}(B)\leq \frac {d^d} {\alpha^\alpha (d-\alpha)^{d-\alpha}} d\ln d(1+o(1))$ for all $\alpha>\frac d 2$ (here $o(1)$ is a function of $d$ and does not depend on $\alpha$).

For a standard covering number $g_0$, it is not important how large the size of each translate is so we may as well replace each homothet by the interior of $B$. When $\alpha>0$ this is not anymore the case. In fact, it appears that if we use the strategy of Rogers from \cite{rog57} but keep the small size of some of the translates and adjust the covering parameters then we can improve the bound on $g_{\alpha}$, even if $\alpha$ is not larger than $\frac d 2$.

\begin{theorem}
For all natural $d$ and all $\alpha$ such that $0\leq \alpha \leq d-1$, $$g_\alpha(B)\leq 2^d (d-\alpha)(\ln d + \ln\ln d + 2 + o(1))$$ for any centrally symmetric $d$-dimensional body $B$ ($o(1)$ is a function of $d$ and does not depend on $\alpha$).
\end{theorem}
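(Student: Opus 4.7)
The plan is to adapt Rogers' 1957 covering-density argument from \cite{rog57} to the $\alpha$-weighted setting. The key observation is that each point of $\mathbb{R}^d$ is covered by many translates of $B$ in Rogers' construction (the density $\theta(B)$ is of order $d\ln d$); this overcounting is invisible to $g_0$, but it can be exploited by $g_\alpha$ via using smaller homothets---each contributing $\lambda^\alpha\ll 1$---for some of the translates.

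I will run a two-scale covering. Fix a small parameter $\mu\in(0,1)$ and cover the ``bulk'' $(1-\mu)B$ by translates of $(1-\mu)B$, then cover the thin ``shell'' $B\setminus(1-\mu)B$ by translates of $\nu B$ for a smaller $\nu$. The covering bound from \cite{rog97}, applied with $H=(1-\mu)B$ and using central symmetry of $B$, gives at most $2^d\theta(B)$ bulk translates, contributing $2^d\theta(B)(1-\mu)^\alpha$ to $\sum_i\lambda_i^\alpha$. The shell has volume at most $d\mu\,\mathrm{Vol}(B)$ by convexity of $B$, and a Rogers-style argument on the convex annular neighbourhood $(1+\nu)B\setminus(1-\mu-\nu)B$ shows the shell stage uses $O(d\mu\,\theta(B)/\nu^d)$ translates of $\nu B$, contributing $O(d\mu\,\theta(B)\,\nu^{\alpha-d})$ to $\sum_i\lambda_i^\alpha$.

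Choosing $\mu=\ln(d/(d-\alpha))/\alpha$ so that $(1-\mu)^\alpha\approx(d-\alpha)/d$, the bulk contribution becomes
\[
2^d\theta(B)\cdot\frac{d-\alpha}{d}=2^d(d-\alpha)(\ln d+\ln\ln d+O(1)),
\]
and the shell contribution can be made lower order for $\alpha\leq d-1$ by choosing $\nu$ comparable to $\mu$. The boundary case $\alpha=0$ reduces to $g_0(B)\leq 2^d\theta(B)$ with no shell step. To replace the additive $O(1)$ by the sharp $2+o(1)$ claimed in the theorem, one runs the probabilistic density argument of \cite{rog57} directly with the $\alpha$-weighted sum and re-optimizes the internal threshold parameter (the analogue of Rogers' $z\sim\ln d$), which is the origin of the improved additive constant.

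The main obstacle is the careful bookkeeping uniformly in $\alpha\in[0,d-1]$: covering the non-convex shell efficiently via the annular enlargement $(1+\nu)B\setminus(1-\mu-\nu)B$ and discarding the many irrelevant translates whose centres lie in $(1-\mu-\nu)B$; ensuring the shell contribution does not dominate the bulk for small $\alpha$ (where both terms are of comparable order and the naive shell estimate $O(d\theta(B)\mu^{\alpha-d+1})$ threatens to blow up as $\mu\to 0$); and extracting the sharp additive constant $2$ by revisiting the probabilistic argument of \cite{rog57} with the $\alpha$-weighted sum in hand rather than using $\theta(B)$ as a black box.
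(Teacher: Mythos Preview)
Your bulk/shell decomposition is not the mechanism that produces the bound, and as written it is internally inconsistent. If the ``bulk'' step really covers $(1-\mu)B$ by translates of $(1-\mu)B$, a single translate suffices and its contribution is $(1-\mu)^\alpha\le 1$, not $2^d\theta(B)(1-\mu)^\alpha$; invoking \cite{rog97} there is vacuous. If instead you mean to cover all of $B$ by translates of $(1-\mu)B$, then the shell step is superfluous, the Rogers--Zong count is $\bigl((2-\mu)/(1-\mu)\bigr)^d\theta(B)\approx 2^d e^{d\mu/2}\theta(B)$ rather than $2^d\theta(B)$, and the net factor $e^{d\mu/2}(1-\mu)^\alpha$ drops below $1$ only for $\alpha>d/2$ --- that is precisely Theorem~\ref{thm:sym1}, not the present result. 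Under either reading the plan fails: shrinking the large homothets by a tiny $\mu$ so that $(1-\mu)^\alpha\approx(d-\alpha)/d$ is not where the factor $(d-\alpha)$ comes from, and your shell term $d\mu\,\theta(B)\,\nu^{\alpha-d}$ with $\nu\sim\mu\sim(\ln d)/d$ is enormous for small $\alpha$, as you yourself flag.

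The paper obtains the factor $(d-\alpha)$ from a different place. It runs Rogers' random-placement argument on a torus with translates of $\lambda B$, $\lambda\to 1$, at a \emph{deliberately deficient} density $\theta=(d-\alpha)(\ln d+\ln\ln d)$, well below the $d\ln d$ needed for a full covering. The uncovered set then has relative volume $e^{-\theta}$; a saturated packing by copies of $\delta B$ with $\delta=1/(d\ln d)$, inflated to $2\delta B$, fills it. The small pieces contribute at most $(1+o(1))\,2^\alpha\,\delta^{\alpha-d}e^{-\theta}$, and the choice of $\theta$ is exactly what makes $\delta^{\alpha-d}e^{-\theta}=1$, so this term is $O(2^d)$. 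The large pieces, counted against a random translate of $B$, contribute $2^d\theta(1+o(1))$, which is the stated bound. In short, the saving comes from using \emph{fewer} large translates and tolerating gaps, not from making the large translates slightly smaller; what you describe in your last sentence as a cosmetic refinement (``re-optimize the internal threshold parameter'' in Rogers' argument) is in fact the entire proof.
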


\begin{proof}
Without loss of generality we assume the volume of $B$ is 1 and its center is 0. We will construct a periodic covering of the space $\mathbb{R}^d$ by translates of $B$, i.e. there will be a full-dimensional lattice $\Lambda$ such that for any translate $B'$ of $B$ and any vector $x\in\Lambda$, $x+B'$ also belongs to the covering. We consider only such lattices $\Lambda$ that 1) for any translate $B'$ of $B$ and any two vectors $x,y\in\Lambda$, $x+B'$ and $y+B'$ do not have common points and 2) for any translates $B'$ and $B''$ of $B$ and any two vectors $x,y\in\Lambda$, $x+B'$ and $y+B'$ cannot both have common points with $B''$. Both conditions can be granted by having the minimal vector of $\Lambda$ longer than 2 diameters of $B$. These two conditions essentially allow us to solve the problem on the torus $T=\mathbb{R}^d/\Lambda$. The preimage of a covering of the torus defines a periodic covering of the space and a proper covering of any translate of $B$. With a small abuse of notation, by $B$ we mean both the initial convex body and its image under the standard quotient mapping to $T$.

Our construction will depend on parameters $\lambda\in(0,1)$, $\delta\in(0,1)$, $M>0$, $N\in\mathbb{N}$. These parameters will be defined later. We consider a lattice $\Lambda$ with determinant $M$ satisfying the conditions above. $M$ can be taken arbitrarily large so such $\Lambda$ definitely exists. Let $x_1$, $\ldots$, $x_N$ be independent random variables uniformly distributed over the torus $T$. Using these random vectors, we construct $N$ random translates of $\lambda B$: $x_1+\lambda B$, $\ldots$, $x_N+\lambda B$. For each individual translate, the probability of each point of $T$ to not be covered by this translate is precisely $1-\frac {\lambda^d} M$. Since the variables are independent, the expected value of the volume of the torus not covered by any of the random translates is $M\left(1-\frac {\lambda^d} M\right)^N$. Hence we can choose concrete $N$ translates of $\lambda B$ in $T$ such that the total volume of $T$ not covered by them is not greater than $M\left(1-\frac {\lambda^d} M\right)^N\leq M e^{-\frac {\lambda^d N} M}$.

In the empty part of the torus, we construct a saturated packing by translates of $\delta B$, i.e. interiors of these translates do not intersect neither each other, nor interiors of already chosen translates of $\lambda B$ and it is impossible to add one more translate of $\delta B$ to the set. Since it is a packing, the volume calculation implies that the number of the translates of $\delta B$ is not greater than $\frac M {\delta^d} e^{-\frac {\lambda^d N} M}$. On the other hand, since the packing is saturated, when we substitute all translates of $\lambda B$ by translates of $(\lambda+\delta) B$ and all translates of $\delta B$ with translates of $2\delta B$ with the same centers, they form a covering of $T$. Indeed, otherwise there is an uncovered point $x\in T$ and $x+\delta B$ can be added to the packing so there is a contradiction to the saturation condition.

Overall, we constructed a covering of $T$ by $N$ translates of $(\lambda+\delta) B$ and no more than $\frac M {\delta^d} e^{-\frac {\lambda^d N} M}$ translates of $2\delta B$. For the next step we take a random translate $x+B$, where $x$ is uniformly distributed over $T$, and calculate the expected value of the sum of $\alpha$-powers of the homothety coefficients of those translates intersecting $x+B$. We note that $x+B$ intersects $y+\beta B$ for any fixed $y\in T$ and $\beta\in(0,1)$ if and only if $x\in y + (1+\beta) B$. Thus the probability of this happening is not greater than $\frac {(1+\beta)^d} M$. The expected value of the sum of $\alpha$-powers in our case is then no greater than

$$\frac {(1+\lambda+\delta)^d} {M} N (\lambda+\delta)^{\alpha} + \frac {(1+2\delta)^d} {M} \frac M {\delta^d} e^{-\frac {\lambda^d N} M} (2\delta)^{\alpha} =$$

$$= \frac {(1+\lambda+\delta)^d} {M} N (\lambda+\delta)^{\alpha} + \frac {(1+2\delta)^d 2^{\alpha}} {\delta^{d-\alpha}} e^{-\frac {\lambda^d N} M}.$$

Since we want an upper bound for $g_{\alpha}(B)$, all homothety coefficients should be less than 1, i.e. $\lambda+\delta<1$ and $2\delta<1$. Then there exists a concrete translate of $B$ so that the sum of $\alpha$-powers of homothets covering it satisfies this inequality. What is left is to choose suitable parameters $N, M, \lambda, \delta$. We denote $\lambda^d \frac N M$ by $\theta$. For any $\theta>0$ and any $\lambda\in(0,1)$, we can choose large enough $M$ and integer $N$ satisfying $\theta=\lambda^d \frac N M$ so that the required lattice properties hold. The inequality holds for all $\lambda<1-\delta$ so, taking the limit $\lambda\rightarrow 1-\delta$, we get that the inequality is valid for $\lambda=1-\delta$ as well. Taking all this into account, we get

$$g_{\alpha}(B)\leq \frac {2^d} {(1-\delta)^d} \theta +  \frac {(1+2\delta)^d 2^{\alpha}} {\delta^{d-\alpha}} e^{-\theta}.$$

Taking $\delta=\frac 1 {d\ln d}$ and $\theta = -(d-\alpha) \ln\delta = (d-\alpha) (\ln d + \ln \ln d)$, we conclude the proof:

$$g_{\alpha}(B) \leq \frac {2^d} {\left(1-\frac 1 {d\ln d}\right)^d} (d-\alpha)  (\ln d + \ln \ln d) + \left(1+\frac {2} {d\ln d}\right)^d 2^{\alpha} = $$

$$\leq 2^d (d-\alpha) (\ln d + \ln \ln d) \left(1 + \frac 1 {\ln d} + o\left(\frac 1 {\ln d}\right)\right) + 2^d (d-\alpha)(1 + o(1)) = $$

$$=2^d (d-\alpha) (\ln d + \ln \ln d + 2 + o(1)).$$
\end{proof}

\section{Acknowledgments}
The author would like to thank the anonymous referee for suggesting Corollary \ref{cor:ball} and the general help on improving the text. The author was supported in part by NSF grant DMS-1400876. This material is partially based upon work supported by the National Science Foundation under Grant DMS-1439786 while the author was in residence at the Institute for Computational and Experimental Research in Mathematics in Providence, RI, during the Spring 2018 semester.

\bibliographystyle{amsalpha}

\end{document}